\documentclass[a4paper]{amsart}

\usepackage{amsthm}
\usepackage{amsfonts}
\usepackage{amsmath,amsthm,amssymb}
\usepackage{comment}
\usepackage{fancybox}
\usepackage{epsf}
\usepackage[abs]{overpic}
\usepackage{layout}

\newtheorem{theorem}{Theorem}[section]
\newtheorem{lemma}[theorem]{Lemma}
\newtheorem{proposition}[theorem]{Proposition}

\newtheorem{notation}[theorem]{Notation}
\theoremstyle{definition}
\newtheorem{definition}[theorem]{Definition}
\theoremstyle{remark}
\newtheorem{remark}[theorem]{Remark}

\newtheorem{claim}[theorem]{Claim}

\numberwithin{equation}{section}

\graphicspath{{figures/}}


\begin{document} 

\title[Goussarov-Polyak-Viro's $n$-equivalence and the pure virtual braid group]{Goussarov-Polyak-Viro's $n$-equivalence and the pure virtual braid group}


\author[Yuka Kotorii]{Yuka Kotorii} 
\address{
Mathematical Analysis Team, RIKEN Center for Advanced Intelligence Project (AIP) 
\\
1-4-1 Nihonbashi, Chuo-ku, Tokyo 103-0027, Japan}
\address{Department of Mathematics, Graduate School of Science, Osaka University\\
1-10 Machikaneyama Toyonaka Osaka 560-0043 Japan
}
\address{interdisciplinary Theoretical \& Mathematical Sciences Program (iTHEMS) RIKEN
\\
 2-1, Hirosawa, Wako, Saitama 351-0198, Japan}
\email{yuka.kotorii@riken.jp}

\begin{abstract} 
In the context of finite type invariants, Stanford introduced a family of equivalence relations on knots defined by the lower central series of the pure braid groups and characterized the finite type invariants in terms of the structure of the braid groups.
It is known that this equivalence and Ohyama's equivalence defined by a local move are equivalent.
On the other hand, in the virtual knot theory, the concept of Ohyama's equivalence was extended by Goussarov-Polyak-Viro, which called an $n$-equivalence. 
In this paper we extend Stanford's equivalence to virtual knots and virtual string links by using the lower central series of the pure virtual braid group, and call it an $L_n$-equivalence.
We then prove that the $L_n$-equivalence is equal to the $n$-equivalence on virtual string links.
Moreover we directly prove that two virtual string links are not distinguished by any finite type invariants of degree $n-1$ if they are $L_n$-equivalent, for any positive integer $n$.

\end{abstract} 
\thanks{The author is partially supported by Grant-in-Aid for Young Scientists (B) (No. 16K17586), Japan Society for the Promotion of Science. This work was in part supported by RIKEN iTHEMS Program.  \\
2010 Mathematics Subject Classification. 57M25, 57M27}


\maketitle

\section{Introduction}
The theory of finite type invariants of knots and links was introduced by Vassiliev \cite{V} and Goussarov \cite{G1, G2} and developed by Birman-Lin \cite{BL}.
People studied a filtration on the set of all knots derived from finite type invariants.
Through studying finite type invariants, Ohyama introduced a family of local moves which is defined as local moves satisfying some property \cite{O} (also \cite{G1}).
A filtration derived from Ohyama's moves implied the filtration derived from finite type invariants.
However, it had been an open question whether it held the converse implication or not. 
 
After that, it solved by Goussarov \cite{G3, G4} and Habiro \cite{H0, H} independently by introducing theories of surgery along embedded graphs in 3-manifolds, called $Y$-graphs (or variation axes) by Goussarov and claspers by Habiro. 
Goussarov \cite{G4} and Habiro \cite{H0, H} proved that a geometric filtration derived from $n$-variation equivalence generated by $Y$-graphs or $C_n$-equivalence by claspers and the algebraic one derived from finite type invariants are the same.
Therefore, the finite type invariants are given a topological characterization.
Moreover, Goussarov proved in \cite{G4} that for knots in $S^3$and string links, the n-variation equivalence coincides with the Ohyama' equivalence.

Stanford also studied a filtration derived from finite type invariants by using the lower central series of the pure braid groups \cite{S2, S3}.
He gave an equivalency of two filtrations by finite type invariant and his equivalence relation in \cite{S3}. 
Therefore the finite type invariant was characterized in terms of the structure of the braid groups.


On the other hand, a {\it $($long$)$ virtual knot} is defined by a (long) knot diagram with virtual crossings modulo Reidemeiser moves, introduced by Kauffman \cite{K}.
Goussarov-Polyak-Viro \cite{GPV} showed that the (long) virtual knot can be redefined as {\it Gauss diagram} and also gave the theory of finite type invariants on Gauss diagrams. 
They also defined an $n$-equivalence on (long) virtual knots as an extension of Ohyama's equivalence and mentioned that the value of a finite type invariant of degree less than or equal to $n$ depended only the $n$-equivalence class.

In this paper, we extend Stanford's equivalence to (long) virtual knots, called an $L_n$-equivalence.
We prove that $L_n$-equivalence coincides with $n$-equivalence on long virtual knots (Theorem~\ref{L_nandn-equ}).
Moreover, we  directly prove that, for any non-negative integer $n$, two $L_n$-equivalent long virtual knots are not distinguished by any finite type invariants of degree $n-1$ (Proposition~\ref{L_{n+1}V_n}).
These results are also established on virtual string links.

\section*{Acknowledgements} 

The author thanks Professor Kazuo Habiro for a lot of comments, discussions and suggestions.
The author also thanks Professor Vassily Manturov for comments and suggestions.

\section{Gauss diagram}

A {\it Gauss diagram on $k$ strands} is an ordered oriented $k$ intervals with several oriented chords having disjoint endpoints and equipped with sign as in Figure \ref{gaussdiagram} (which is defined up to isotopy of intervals).  
Here, we call the chord an {\it arrow}.
The {\it trivial Gauss diagram} is a Gauss diagram without arrow. 

\begin{figure}[h]
  \begin{center}
\includegraphics[width=.7\linewidth]{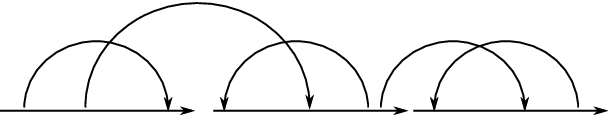}
\put(-235,33){$+$} \put(-195,46){$-$} \put(-120,33){ $-$} \put(-85,33){ $+$} \put(-35,33){ $+$}
\put(-250,-5){$1$} \put(-160,-5){$2$} \put(-75,-5){$3$}
    \caption{A Gauss diagram on 3 strands}
    \label{gaussdiagram}
  \end{center}
\end{figure}

{\it Reidemeister moves} among Gauss diagrams on several strands are the following three local moves in Figure \ref{Reidemeistermoves}:
First Reidemeister move (RI) is in the top row. Second Reidemeister move (RII) is in the second row.
Third Reidemeister move (RIII) is in the remaining two rows. 
\begin{figure}[h]
  \begin{center}
\includegraphics[width=.8\linewidth]{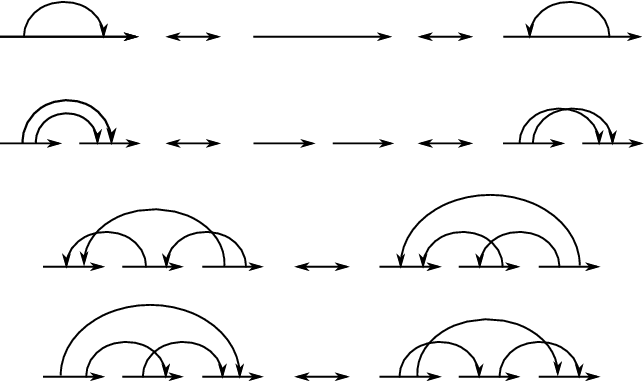}
\put(-207,160){$RI$} \put(-95,160){$RI$} \put(-207,112){$RII$} \put(-95,112){$RII$} \put(-150,58){$RIII$} \put(-150,8){$RIII$}
\put(-263,175){$\epsilon$} \put(-35,175){$\epsilon$} \put(-263,130){$-\epsilon$} \put(-263,115){$\epsilon$}\put(-50,125){$\epsilon$} \put(-20,125){$-\epsilon$}
\put(-263,65){$-$} \put(-233,80){$-$} \put(-190,70){$-$} \put(-100,65){$-$} \put(-80,85){$-$} \put(-60,70){$-$}
\put(-263,20){$+$} \put(-235,22){$+$} \put(-210,22){$+$} \put(-110,20){$+$} \put(-80,30){$+$} \put(-40,20){$+$} 
    \caption{The Reidemeister moves}
    \label{Reidemeistermoves}
  \end{center}
\end{figure}

\begin{definition}
Two Gauss diagrams $D$ and $D'$ on several strands are said to be {\it equivalent} if $D$ and $D'$ are related by a sequence of Reidemeister moves. 
By $D {\sim} D'$ we mean that $D$ and $D'$ are equivalent.   
We define a {\it $k$-component virtual string link} to be the equivalence class of a Gauss diagram $D$ on $k$ strands, which is denoted by $[D]$.
In particular, 1-component virtual string link is called a {\it long virtual knot}.
We denote by $\mathcal{VSL}(k)$ the set of $k$-component virtual string links.
Similarly, the equivalence class of a Gauss diagram on a circle (or several circles) is a virtual knot (or virtual link, respectively).
\end{definition}

\begin{definition}
Let $D$ and $D'$ are two Gauss diagrams on the same strands.
We denote the {\it composition} of $D$ and $D'$ as $D \cdot D'$, which attaches a head of the $i$th interval of $D$ to an end of the $i$th interval of $D'$ for each $i$. 
\end{definition}
\section{Finite type invariant of virtual string links}

Goussarov Polyak and Viro defined a finite type invariant for (long) virtual knots in \cite{GPV}.
Similar way to classical knots, we can define Vassiliev-Goussarov filtration on $\mathbb{Z}$-module generated by the set of (long) virtual knots. 
Similarly, we can define them for virtual (string) links. 

A {\it dashed Gauss diagram} is a Gauss diagram with two types of signed chords, arrow and dashed arrow as in Figure \ref{dashedgaussdiagram}, possibly both with only arrows and with only dashed arrows.   
The dashed Gauss diagrams are said to be {\it equivalent} if they are related by a sequence of Reidemeister moves for arrows with fixing dashed arrows.
We also denote by $[D]$ the equivalence class of a dashed Gauss diagram $D$.
For each $n \geq 0$, let $\mathcal{SVSL}^{n}(k)$ denote the set of equivalence classes of dashed Gauss diagrams on $k$ strands with $n$ dashed arrows.
Then, in particular, $\mathcal{SVSL}^{0}(k)=\mathcal{VSL}(k)$.

\begin{figure}[h]
  \begin{center}
\includegraphics[width=.7\linewidth]{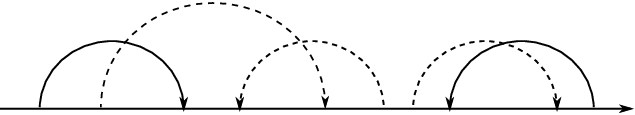}
\put(-225,30){$+$} \put(-185,46){$-$} \put(-120,30){ $-$} \put(-85,30){ $+$} \put(-35,30){ $+$}
    \caption{A dashed Gauss diagram on 1 strand}
    \label{dashedgaussdiagram}
  \end{center}
\end{figure}

We construct a map $\varphi : \mathcal{SVSL}^{n}(k) \rightarrow \mathbb{Z}\mathcal{VSL}(k)$ as follows.
Let $D$ be a dashed Gauss diagram with $n$ dashed arrows.
Let $a_1, \cdots , a_n$ be the dashed arrows of $D$. 
For $\epsilon_1, \cdots , \epsilon_n$ in $\{\pm 1\}$, let $D_{\epsilon_1, \cdots , \epsilon_n}$ denote the Gauss diagram obtained from $D$ by replacing each dashed arrow $a_i$ with an arrow with the same sign if $\epsilon_i =1$ and removing each dashed arrow $a_i$ if  $\epsilon_i =-1$. 
We then define
$$\varphi([D]) = \sum_{\epsilon_1, \cdots , \epsilon_n \in \{\pm1\}} \epsilon_1 \cdot \dots \cdot \epsilon_n [D_{\epsilon_1, \cdots , \epsilon_n}].$$ 

\begin{definition}
Let $f$ be an invariant of $\mathcal{VSL}(k)$ with values in an abelian group $A$.
We extend it to $\mathbb{Z}\mathcal{VSL}(k)$ by linearly. Then $f$ is said to be a {\it finite type invariant of degree $n$} if $f \circ \varphi ([D])$ vanishes for any $k$-component dashed Gauss diagram $D$ with more than $n$ dashed arrows.    
\end{definition}

\begin{definition}
Denote by $J_n$ the subgroup of $\mathbb{Z}\mathcal{VSL}(k)$ generated by the set consisting of the element $\varphi([D])$, where $[D]$ is in $\mathcal{SVSL}^{n}(k)$.
It is easy to see that the $J_n(k)$'s form a descending filtration of two-sided ideals of the monoid ring $\mathbb{Z}\mathcal{VSL}(k)$ under the composition:
$$\mathbb{Z}\mathcal{VSL}(k)=J_0(k)  \supset J_1(k)  \supset J_2(k)  \supset \cdots , $$ 
which we call the Vassiliev-Goussarov filtration on $\mathbb{Z}\mathcal{VSL}(k)$.
\end{definition}



\begin{remark}
Let $A$ be an abelian group and $n$ a positive integer.
The following two conditions are equivalent. 
A map is an $A$-valued finite type invariant of degree $n$ on $\mathcal{VSL}(k)$ and the map is a homomorphism of  abelian groups from $\mathbb{Z}\mathcal{VSL}(k)$ to $A$ which vanishes on $J_{n+1}(k)$
\end{remark}

\begin{definition}
For $n \geq 0$, two $k$-component virtual string links $L$ and $L'$ are said to be {\it $V_n$-equivalent} if $L$ and $L'$ are not distinguished by any finite type invariants of degree $n$ with values in any abelian group, equivalently, $L- L' \in J_{n+1}(k)$.  
\end{definition}

\section{Definition of $L_n$-equivalence}

By using the pure virtual braid group, we introduce a new equivalence relation on Gauss diagrams, called $L_n$-equivalence. 
Here it is known that the pure braid group is a subgroup of the pure virtual braid group (see \cite{F, M}).
We then give properties of the set of $L_n$-equivalence classes.

\begin{definition}[\cite{B, KL}]
A {\it pure virtual braid group} $PV_k$ on $k$ strands is a group represented by the following group representation.
\begin{align*}
PV_k = 
\left<
\begin{array}{l}
\hspace{-.8cm} \\
\hspace{-.8cm}
\end{array}
\right.
\left.
\begin{array}{l}
\hspace{-.3cm} \mu_{ij} \ (1 \leq i, j \leq k, i \neq j)  \hspace{-.2cm} 
\end{array}
\right.
\left|
\begin{array}{l}
\mu_{ij}\mu_{il}\mu_{jl} =\mu_{jl}\mu_{il}\mu_{ij}  \\
\mu_{ij}\mu_{lm} =\mu_{lm}\mu_{ij} 
\end{array}
\right.
\left.
\begin{array}{l}
\hspace{-.4cm} \text{(for all distinct }  i,j,l) \\
\hspace{-.4cm} \text{(for all distinct }  i,j,l,m)
\end{array}
\hspace{-.1cm} \right>.
\end{align*}
\end{definition}

Here, an element of the pure braid group is represented by a diagram as in Figure \ref{purevirtualbraid},
where $\mu_{ij}^\epsilon$ is correspondence with a horizontal arrow equipped with sign $\epsilon$ from the $i$-th strand to the $j$-th strand,    
and we determine that the orientation of the strand is from top to bottom.
For example, the diagram in Figure \ref{purevirtualbraid} represents $\mu_{12}\mu_{31}^{-1}\mu_{23}\mu_{12}^{-1} \in PV_3$.

\begin{figure}[h]
  \begin{center}
\includegraphics[width=.15\linewidth]{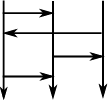}
\put(-55,53){$1$} \put(-30,53){$2$} \put(-6,53){$3$}
\put(-45,48){$+$} \put(-20,38){$-$} \put(-20,27){$+$} \put(-45,16){$-$}
    \caption{An element of the pure virtual braid group $PV_3$}
    \label{purevirtualbraid}
  \end{center}
\end{figure}

Let $h \in PV_k$ and $h' \in PV_{k'}$. 
We denote the {\it composition} and {\it tensor product} of $h$ and $h'$ as $h \cdot h' = {\includegraphics[width=.04\linewidth]{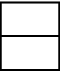}\put(-10,10){$h$} \put(-10,1){$h'$}} \in PV_{k}$ if $k=k'$ and $h \otimes h' = {\includegraphics[width=.08\linewidth]{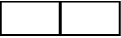} \put(-25,1){$h$} \put(-10,1){$h'$}} \in PV_{k+k'}$ for any $k$ and $k'$, respectively. 
By $\Gamma_n(G)$ we mean the $n$-th lower central subgroup of the group $G$, that is, $\Gamma_1(G)= G$ and $\Gamma_n(G)= [\Gamma_{n-1}(G), G]$, which is the commutator of $\Gamma_{n-1}(G)$ and $G$, that is, $<[a,b] \mid a \in \Gamma_{n-1}(G), b \in G >$  where $[a,b]=aba^{-1}b^{-1}$.

\begin{definition}
Two Gauss diagrams $D$ and $D'$ on several strands are related by  an {\it $L_n$-move} if there are a positive integer $k$, an element $h$ in the $n$-th lower central subgroup $\Gamma_n(PV_k)$ of the pure virtual braid group on $k$ strands and not in $\Gamma_{n+1}(PV_k)$, and an embedding $e$ of $k$ strands of $h$ such that $D^{(h,e)}=D'$, 
where $D^{(h,e)}$ is obtained from $D$ by attaching $h$ by  an embedding $e$ of $k$ strands of $h$ in the intervals of $D$ except for the endpoints of all arrows of $D$ as in Figure \ref{clasperdef}.
By $D \overset{L_n}{\rightarrow} D'$ we mean that $D'$ is obtained from $D$ by a $L_n$-move. 
In particular, we write $D \overset{(h,e)}{\rightarrow} D'$ if $D'=D^{(h,e)}$.

\begin{figure}[h]
  \begin{center}
\includegraphics[width=.8\linewidth]{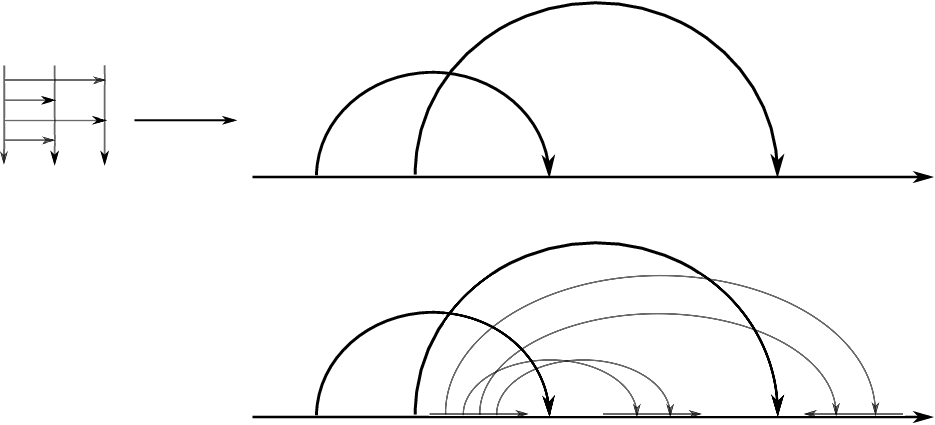}
\put(-235,98){$e$} \put(-275,70){$h$} \put(-0,73){$D$} \put(-145,-5){$1$} \put(-90,-5){$2$} \put(-30,-5){$3$} 
 \put(-245,5){$D^{(h,e)}=$}
 \put(-290,118){$1$} \put(-275,118){$2$} \put(-260,118){$3$}
  \put(-283,110){{\small $+$}} \put(-269,98){$-$} \put(-283,80){$+$} \put(-269,88){$-$} 
\put(-190,100){$+$} \put(-130,130){$+$}
\put(-190,25){$+$} \put(-130,55){$+$}
\put(-40,20){$-$} \put(-40,35){$+$} \put(-105,8){$-$} \put(-80,8){$+$}
    \caption{An embedded pure virtual braid $(h,e)$ for a Gauss diagram on 1 strand}
    \label{clasperdef}
  \end{center}
\end{figure}

We call a pair $(h,e)$ for $D$ an {\it embedded pure virtual braid} for $D$.  
We define that a pair $(h,e)$ is of {\it degree} $n$ if $h \in \Gamma_n(PV_k)$ and $h \notin \Gamma_{n+1}(PV_k)$, 
where $k$ is a positive integer, and denote the degree of the pair $(h,e)$ by deg$(h,e)$.
Two embedded pure virtual braids for $D$ are {\it disjoint} if their embeddings are disjoint in the intervals of $D$.
For disjoint pairs $(h_1,e_1)$ and $(h_2,e_2)$ for $D$, $D^{\{(h_1,e_1), (h_2,e_2) \}}$ means $(D^{(h_1,e_1)})^{(h_2,e_2)}$ or equivalently $(D^{(h_2,e_2)})^{(h_1,e_1)}$.
Moreover, we can represent $D^{\{(h_1,e_1), (h_2,e_2) \}}$ as $D^{(h_1 \otimes h_2,e_1 \otimes e_2)}$.
Here it is easy to see that the degree of $(h_1 \otimes h_2,e_1 \otimes e_2)$  is $\min \{\text{deg}(h_1,e_1), \text{deg}(h_2,e_2) \}$. 
\end{definition} 

\begin{remark}
The first relation of the pure virtual braid group corresponds with the second relation of third Reidemeister moves  (illustrated in Figure \ref{Reidemeistermoves}). 
Therefore $[D^{(h,e)}]$ does not depend on a word representing a pure virtual braid $h$. 
On the other hand, we consider $h$ in $D^{(h,e)}$ as a word representing $h$.
For example, we consider $D^{(h \cdot h^{-1},e)}$ and $D$ are different Gauss words and $D^{(h \cdot h^{-1},e)}$ equals to $D$ up to a sequence of second Reidemeister moves. 
\end{remark}

In order to give properties of the $L_n$-move, we define the parallel embedding. 
Let $h$, $h' \in PV_k$ and $(h,e)$ and $(h',e')$ are disjoint embedded pure virtual braids for a Gauss diagram $D$.
The pair $(h,e)$ is {\it upper} (or {\it under}, respectively) {\it parallel} to $(h',e')$ (or embedding $e$ is upper (or under) parallel to $e'$) if for each $i$ $(1 \leq i \leq k)$, the  orientations of embeddings of $i$-th strands of $h$ and $h'$ by $e$ and $e'$ are the same, an $i$-th embedding by $e$ is upper (or under, respectively) than the $i$-th one by $e'$ with respect to the orientation, and there is no endpoints of arrows of $D$ between embedded $i$-th strands for each $i$, as in Figure \ref{parallel}. 
That is, $D^{\{(h,e), (h', e')\}}$ equals to $D^{(h \cdot h', e)}$ (or $D^{(h'\cdot h, e)}$, respectively). 
We note that the following statements are the same.
An embedded pure virtual braid $(h, e)$ is an under parallel to $(h', e')$ and $(h', e')$ is an upper parallel to $(h, e)$.

\begin{proposition}\label{}
The $L_n$-moves and Reidemeister moves generate an equivalence relation on Gauss diagrams.
\end{proposition}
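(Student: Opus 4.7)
The plan is to verify the three axioms of an equivalence relation directly. Reflexivity (take the empty sequence of moves) and transitivity (concatenate two sequences of $L_n$-moves and Reidemeister moves) are built into the phrase ``generated by'' in the definition. The substantive content therefore lies in symmetry: that any single $L_n$-move $D \overset{(h,e)}{\to} D'$ can be inverted by another $L_n$-move $D' \overset{L_n}{\to} D$.

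For symmetry, the natural candidate for the inverse is the clasper $(h^{-1}, e')$, where $e'$ is the embedding obtained from $e$ by placing the $k$ strands of $h^{-1}$ immediately adjacent to the $k$ strands of $h$ along the interval of $D'$. Two things must be checked. First, the new clasper has degree exactly $n$: because $\Gamma_n(PV_k)$ is a subgroup we have $h^{-1} \in \Gamma_n(PV_k)$, and $h^{-1} \notin \Gamma_{n+1}(PV_k)$ since otherwise $h = (h^{-1})^{-1}$ would lie in $\Gamma_{n+1}(PV_k)$, contradicting $\deg((h,e)) = n$. Second, $(D')^{(h^{-1}, e')}$ is Reidemeister-equivalent to $D$: by construction the two claspers abut along the same $k$ strands, so the combined braid on those strands represents $h \cdot h^{-1} = 1 \in PV_k$, and can therefore be reduced to the empty braid by a finite sequence of applications of the defining relations of $PV_k$ together with cancellations of inverse pairs $\mu_{ij}^{\epsilon}\mu_{ij}^{-\epsilon}$.

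The main obstacle is justifying the translation in that last step: namely, that Reidemeister II and III moves on Gauss diagrams faithfully realize the defining relations of $PV_k$ (the braid relation $\mu_{ij}\mu_{il}\mu_{jl} = \mu_{jl}\mu_{il}\mu_{ij}$ being visible through RIII, the commuting relation $\mu_{ij}\mu_{lm} = \mu_{lm}\mu_{ij}$ through planar isotopy of disjoint arrows, and cancellation of inverse generators through RII). Once this dictionary is in place, any word equal to the identity in $PV_k$ can be erased from a Gauss diagram by Reidemeister equivalence, so $D' \overset{L_n}{\sim} D$ and the proposition follows.
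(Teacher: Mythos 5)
Your proposal is correct and follows essentially the same route as the paper: reflexivity and transitivity come for free from the generating-set formulation, and symmetry is obtained by attaching $h^{-1}$ adjacent to $h$ and cancelling $h\cdot h^{-1}$. The only difference is that you invoke the full translation of the defining relations of $PV_k$ into Reidemeister moves, whereas the paper sidesteps this by taking for $h^{-1}$ the literal reversed-inverse word of the word representing $h$, so that $h\cdot h^{-1}$ cancels arrow by arrow using second Reidemeister moves alone.
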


\begin{proof}
The case of the reflexive and  transitive relation are obvious. We show the symmetric relation. Let $D'=D^{(h,e)}$ where $h \in \Gamma_n(PV_k)$ and $h \notin \Gamma_{n+1}(PV_k)$. 
Then 
$D'^{(h^{-1},e')} = D^{({h^{-1}} \cdot h, e)}$, where  ${(h^{-1},e')}$ is upper parallel to $(h,e)$.  
Since the Gauss diagram $D^{(h^{-1} \cdot h,e)}$ equals to $D$ up to a sequence of RII's, we have that $D' \overset{L_n}{\rightarrow} D$.
\end{proof}

We call this equivalence relation an {\it $L_n$-equivalence}. By $D \overset{L_n}{\sim} D'$ we mean that $D$ and $D'$ are $L_n$-equivalent.   

\begin{proposition}\label{movedegree}
If $1 \leq n \leq n'$, then an $L_{n'}$-move is achieved by an $L_n$-move. Therefore $L_{n'}$-equivalence implies $L_{n}$-equivalence.
\end{proposition}

\begin{proof}
By the property of the lower central series, $\Gamma_n(PV_k) < \Gamma_{n-1}(PV_k)$ for any $n \geq 2$.
For any embedded pure virtual braid $(h,e)$ of degree more than or equal to $n-1$ for $D$, there exists a set $H$ of disjoint embedded pure virtual braids for $D$ such that $D^{(h,e)}=D^{H}$ and each element in $H$ has degree $n-1$, because any element of $\Gamma_{n}$ is represented by the product of elements in $\Gamma_{n-1}$ each of which is not in $\Gamma_{n}$. 
\end{proof}

\begin{figure}[h]
  \begin{center}
\includegraphics[width=.8\linewidth]{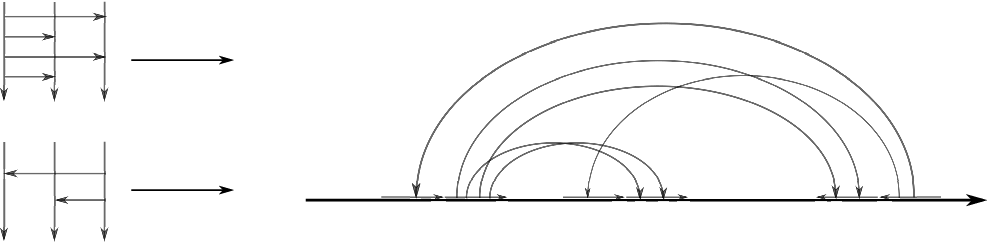}
\put(-240,58){$e$} \put(-310,55){$h=$} \put(-310,15){$h'=$} 
\put(-163,5){$1$} \put(-110,5){$2$} \put(-32,5){$3$} 
 \put(-240,20){$e'$}
 \put(-290,78){$1$} \put(-275,78){$2$} \put(-260,78){$3$}  \put(-290,30){$1$} \put(-275,30){$2$} \put(-260,30){$3$}
 \put(-163,60){$(h',e')$}  \put(-76,20){$(h,e)$}
  \put(-283,68){{\small $+$}} \put(-269,58){$-$} \put(-283,40){$+$} \put(-269,48){$-$} 
\put(-100,65){$+$} \put(-100,55){$+$} \put(-100,38){$-$} \put(-120,38){$-$}
\put(-110,15){$-$} \put(-90,15){$+$}
\put(-283,24){{\small $+$}} \put(-269,15){$-$}
    \caption{$(h',e')$ is an upper parallel to $(h,e)$}
    \label{parallel}
  \end{center}
\end{figure}

The following proposition is key property in this paper.

\begin{proposition}\label{anotherdef}
Two Gauss diagrams $D$ and $D'$ are $L_{n}$-equivalent if and only if there exists an embedded pure virtual braid $(h,e)$ of degree $n$ such that $D^{(h,e)}$ equals to $D'$ up to a sequence of Reidemeister moves. 
\end{proposition}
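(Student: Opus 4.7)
The plan is to handle the two directions separately. The ($\Leftarrow$) direction is immediate from the definition: applying the single $L_n$-move $D \overset{L_n}{\to} D^{(h,e)}$ followed by a sequence of Reidemeister moves to $D'$ already realizes $D \overset{L_n}{\sim} D'$.

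For the ($\Rightarrow$) direction, I would induct on the number of $L_n$-moves in a sequence witnessing $D \overset{L_n}{\sim} D'$. The base case (no $L_n$-moves at all) uses the trivial clasper. For the inductive step, suppose the hypothesis produces a clasper $(h,e)$ of degree $n$ with $D^{(h,e)}$ equal to the intermediate diagram $D_m$ up to Reidemeister moves, and suppose $D_m \overset{L_n}{\to} D_{m+1}$ through a further clasper $(h',e')$. The first subgoal is to transfer $e'$ backwards through the intervening Reidemeister sequence into an embedding $\tilde{e}'$ in the interval of $D^{(h,e)}$; this is possible because Reidemeister moves leave the underlying interval intact and only rearrange arrow endpoints locally. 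After a small perturbation of strand endpoints if necessary (which does not change the equivalence class of the Gauss diagram) to make the endpoints of $e$ and $\tilde{e}'$ disjoint, the two claspers $(h,e)$ and $(h',\tilde{e}')$ sit as disjoint claspers on $D$ whose sequential application reproduces $D_{m+1}$ up to Reidemeister moves. The merger step then fuses them into a single clasper using the tensor product in the pure virtual braid group: set $H = h \otimes h' \in PV_{k+k'}$ with $E$ the combined embedding of all $k+k'$ strands. Since the tensor product encodes two independent families of crossings with no interaction, $D^{(H,E)}$ coincides with the sequential attachment $(D^{(h,e)})^{(h',\tilde{e}')}$. Moreover, the natural inclusions $PV_k \hookrightarrow PV_{k+k'}$ and $PV_{k'} \hookrightarrow PV_{k+k'}$ (sending generators to generators) are group homomorphisms carrying $\Gamma_n$ into $\Gamma_n$, so $h \otimes 1$ and $1 \otimes h'$ both lie in $\Gamma_n(PV_{k+k'})$, and hence so does their product $H = h \otimes h'$. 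This closes the induction.

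The main technical obstacle will be the geometric bookkeeping at the merger step: carefully checking, from the attachment convention of Figure \ref{clasperdef}, that the tensor-product clasper $(H,E)$ places every arrow endpoint on the interval in the same position as the sequential application $(D^{(h,e)})^{(h',\tilde{e}')}$. A secondary nuisance is the strict degree condition $H \notin \Gamma_{n+1}$, which $H$ might fail; however, by the preceding proposition such a higher-degree clasper still realizes an $L_n$-move, which suffices for the statement.
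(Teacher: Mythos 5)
Your overall strategy is the same as the paper's: push the Reidemeister moves to the end of the sequence (the paper's step (1), your ``transfer $e'$ backwards''), merge consecutive claspers via the tensor product $h\otimes h'\in\Gamma_n(PV_{k+k'})$ (the paper's step (2)), and fix up the degree at the end (the paper's step (3)). The $(\Leftarrow)$ direction and the observation that the inclusions $PV_k,PV_{k'}\hookrightarrow PV_{k+k'}$ carry $\Gamma_n$ into $\Gamma_n$ are fine.

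The genuine gap is exactly at the point you flag as ``the main technical obstacle,'' and the resolution you propose --- a small perturbation making the endpoints of $e$ and $\tilde e'$ disjoint --- does not work. After transferring $(h',e')$ back to $D^{(h,e)}$, a strand of $h'$ may be embedded \emph{between two arrow endpoints that were created by $(h,e)$}, i.e.\ inside the subinterval $e(s_i)$. You cannot perturb it out of that subinterval without crossing an arrow endpoint, which changes the Gauss diagram; and as long as it stays inside, the pair $(h,e)$, $(h',\tilde e')$ is not a pair of disjoint claspers on $D$, and $(D^{(h,e)})^{(h',\tilde e')}$ is \emph{not} of the form $D^{(h\otimes h',E)}$ for any embedding $E$ of $k+k'$ separate strands, since in $h\otimes h'$ the arrows of $h'$ can never interleave with the arrows of $h$ within a single strand's insertion point. (Splitting the offending strand of $h$ into two is not a way out either: assigning each arrow of a word for $h$ to one of the two copies is word-dependent and not a group homomorphism, so it need not preserve $\Gamma_n$.) The paper circumvents this by first replacing $D$ with the RII-equivalent diagram $D^{(h\cdot h^{-1},e)}$: the arrows of the first copy of $h$ are thereby absorbed into the \emph{base} diagram, the strands of $h'$ are embedded relative to those base arrows, and a fresh copy of $h$ is appended as part of the clasper $h\otimes h'$; after cancelling $h^{-1}\cdot h$ by RII moves one recovers $(D^{(h,e)})^{(h',e')}$. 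You need this (or an equivalent device) to make the merger legitimate. A secondary point: the statement asks for a clasper of degree exactly $n$ (i.e.\ $h\in\Gamma_n\setminus\Gamma_{n+1}$), so if the merged braid lands in $\Gamma_{n+1}$ you must still replace it by a degree-$n$ clasper realizing the same diagram (the paper's step (3)); invoking the preceding proposition only shows the move is an $L_n$-move in the weaker sense, not that the required degree-$n$ clasper exists.
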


\begin{proof}
A necessary condition is obvious. 
To prove a sufficient condition, we will show the following two statements (1) and (2).

\begin{itemize}
\item[(1)] If $D_2$ is obtained from $D_1$ by RI (RII or RIII, respectively) and then an $L_{n}$-move $(h_1,e_1)$ ($(h_2,e_2)$ or $(h_3,e_3)$, respectively), then $D_2$ is obtained from $D_1$ by an $L_{n'}$-move $(h'_1,e'_1)$ ($(h'_2,e'_2)$ or $(h'_3,e'_3)$, respectively)  ($n' \geq n$), then RI (RII or RIII, respectively), and then the sequence of RII's.  
\item[(2)] If $D_2$ is obtained from $D_1$ by an $L_{n}$-move $(h,e)$ and then another $L_n$-move $(h',e')$, then  $D_2$ is obtained from $D_1$ by a sequence of RII's and then an $L_{n}$-move $(h'',e'')$, and then a sequence of RII's.
\end{itemize}

By (1), (2) and Proposition \ref{movedegree}, if $D$ and $D'$ are $L_{n}$-equivalent, there is an $L_n$-move and a sequence of Reidemeister moves such that $D'$ is obtained from $D$ by the $L_n$-move and then the sequence of Reidemeister moves, which proves this proposition. 
 
We show (1). We consider the case of RI.
In Figure \ref{$L_n$ and RI}, these Gauss diagrams are identical except in a local place of near to RI in this figure. 
Here $D_1$ is the upper left and $D_2$ the upper right in Figure \ref{$L_n$ and RI}.
By gray line we mean an embedded pure virtual braid, where we omit orientations and signs of their arrows. 
Given an embedded pure virtual braid $(h_1,e_1)$, we can move the ends of its arrows out the arrow derived from RI by a sequence of RII's (the right part in Figure \ref{$L_n$ and RI}). 
We then can consider arrows derived from $(h_1, e_1)$ and new arrows as a new embedded pure virtual braid for $D_1$ (the lower right in Figure \ref{$L_n$ and RI}), which has degree $\geq n$.
We denote it by $(h'_1,e'_1)$.
Then, $D_2$ is obtained from $D_1^{(h'_1,e'_1)}$ by the RI and then the RII's.
Moreover, similar considerations apply to the other RI.

\begin{figure}[h]
  \begin{center}
  \includegraphics[height=3.3cm, scale=3]{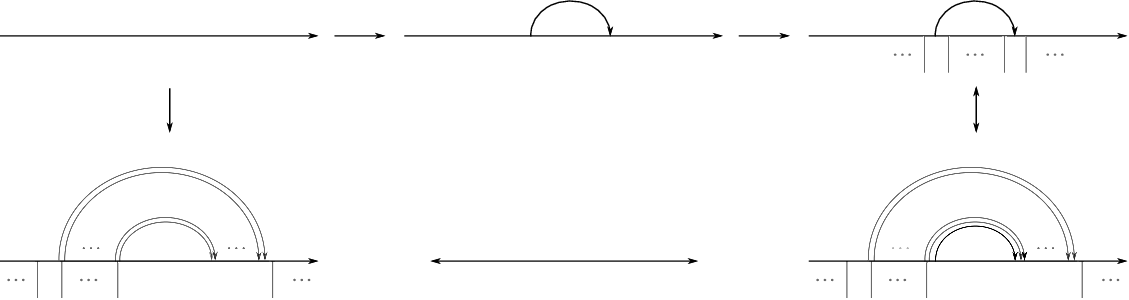}
 \put(-248,85){$RI$} \put(-128,88){$(h_1,e_1)$} \put(-295,55){$(h'_1,e'_1)$} \put(-20,68){$(h_1,e_1)$} \put(-40,55){$RII's$} \put(-190,5){$RI$} \put(-30,-5){$(h'_1,e'_1)$} 
    \caption{Change of an $L_n$-move and a first Reidemeister move}
    \label{$L_n$ and RI}
  \end{center}
\end{figure}

Similar way to RI, in the case of RII and RIII, we give embedded pure virtual braids $(h'_2,e'_2)$ and $(h'_3,e'_3)$ as in Figure \ref{$L_n$ and RII} and \ref{$L_n$ and RIII}, respectively, which are one of RII and RIII. 
Here, in Figure \ref{$L_n$ and RIII} for simplicity we draw only one strand embedded in each interval between endpoints of arrows derived from RIII.

\begin{figure}[h]
  \begin{center} 
  \includegraphics[height=4.3cm, scale=4]{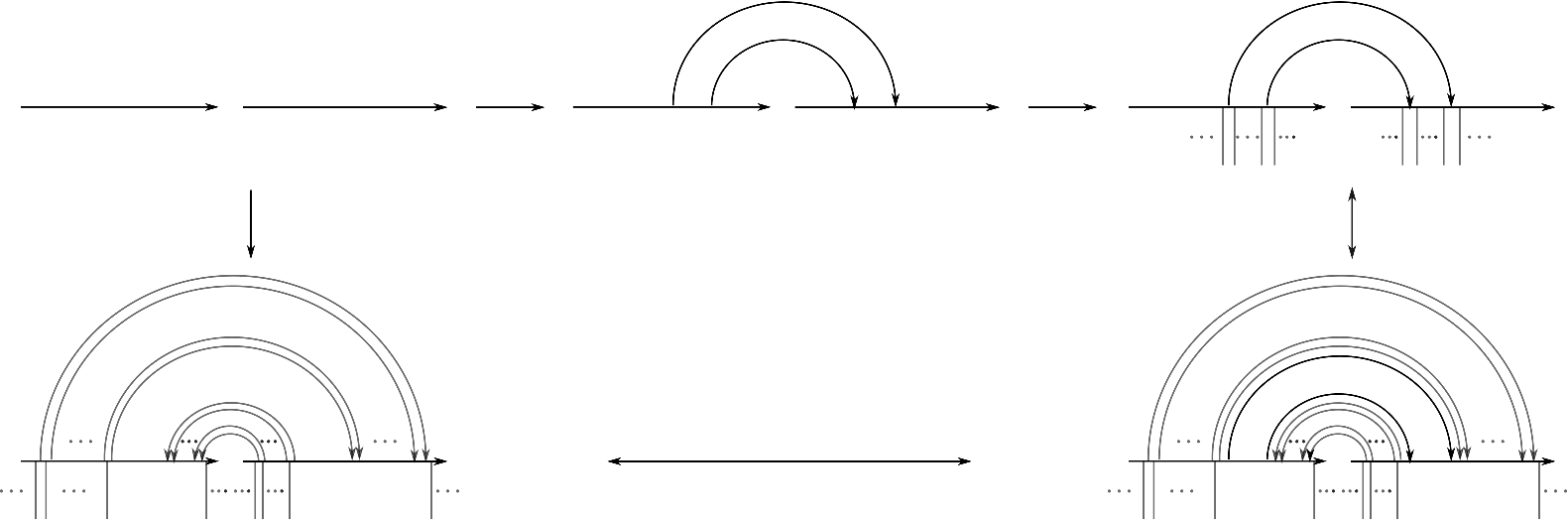} 
  \put(-260,105){$RII$} \put(-133,105){$(h_2,e_2)$} \put(-300,70){$(h'_2,e'_2)$} \put(-20,88){$(h_2,e_2)$} \put(-50,68){$RII's$} \put(-190,5){$RII$} \put(-30,-5){$(h'_2,e'_2)$} 
    \caption{Change of an $L_n$-move and a second Reidemeister move}
    \label{$L_n$ and RII}
 \end{center} 
\end{figure}

\begin{figure}[h]
  \begin{center} 
  \includegraphics[height=4.8cm, scale=4]{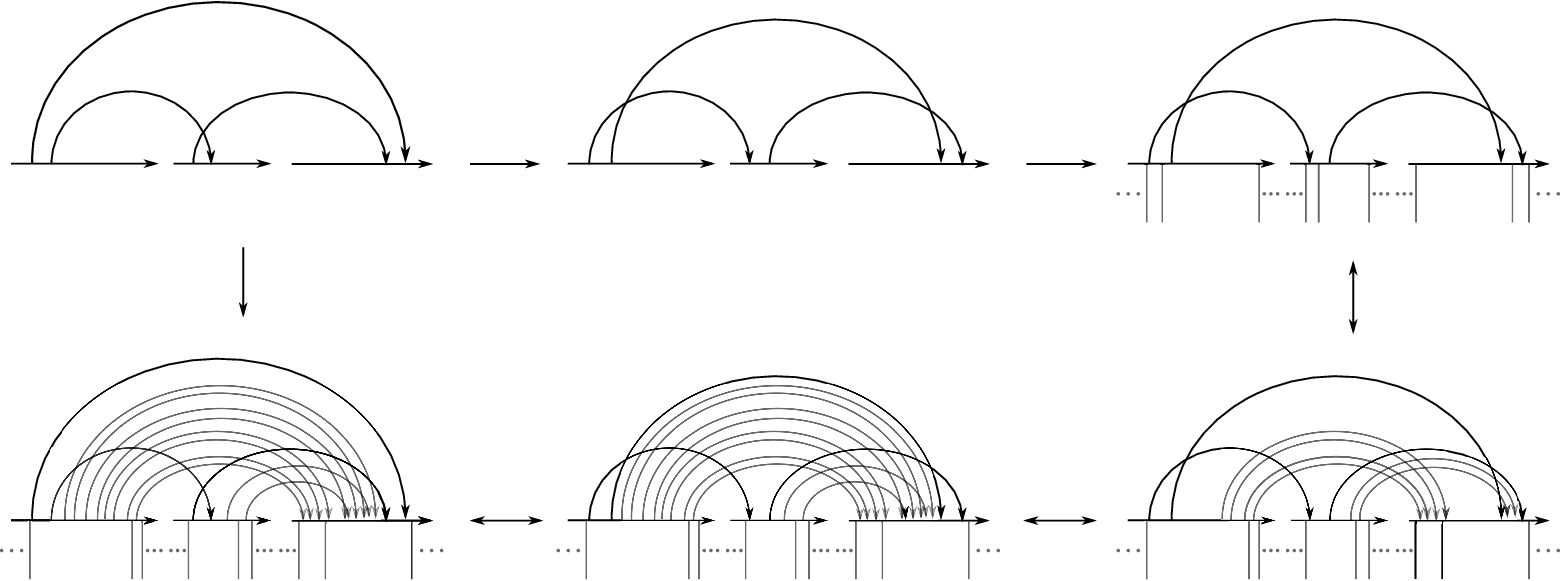}
  \put(-260,105){$RIII$} \put(-131,105){$(h_3,e_3)$} \put(-300,70){$(h'_3,e'_3)$} \put(-20,88){$(h_3,e_3)$} \put(-48,65){$RII's$} \put(-258,5){$RIII$} \put(-128,5){$RII's$} 
    \caption{Change of an $L_n$-move and a third Reidemeister move}
    \label{$L_n$ and RIII}
 \end{center} 
\end{figure}

We show (2). 
We can transform $D_1$ to $D_1^{\{(h,e), (h^{-1},\bar{e})\}}$ by a sequence of RII's,
where $\bar{e}$ is upper parallel $e$ and $\bar{e}$ is disjoint from $e'$ for $D_1$.
Then, we can transform ${(D_1^{\{(h,e),(h^{-1},\bar{e})\}})}^{(h \otimes h',\tilde{e} \otimes e')}$ to $(D_1^{(h,e)})^{(h',e')}=D_2$ by a sequence of RII's, where $\tilde{e}$ is upper parallel to $\bar{e}$.
Here the degree of $(h \otimes h', \tilde{e} \otimes e')$ is $n$. 
We set $(h \otimes h',\tilde{e} \otimes e') $ by $(h'', e'')$ and the statement (2) holds. 
\end{proof}


\begin{remark}
It is obvious that Proposition \ref{anotherdef} is equivalent to the following statement.
There exists the union $H$ of disjoint embedded pure virtual braids of degree $n$ such that $D^{H}$ equals to $D'$ up to a sequence of the Reidemeister moves. 
\end{remark}

\begin{remark}
In \cite{MY}, Meilhan and Yasuhara introduced a family of local moves as an extension of $C_n$-equivalence to welded knots, which is a quotient of virtual knots.  
They also discuss the virtual knots in this paper. 
They proved that two virtual knots related by their moves are can not be distinguished by finite type invariants for each degree. 
It has still been open that it would hold the converse implication.
\end{remark}

\begin{lemma}\label{clasperproperty}
Let $n \geq 1$. Let $D$ be a Gauss diagram and $(h,e)$ an embedded pure virtual braid of degree $n$ for $D$.
Then for any Gauss diagram $D'$ which is equivalent to $D$ there is an embedded pure virtual braid $(h',e')$ of degree $n$ for $D'$ such that $D^{(h,e)}$ is equivalent to $D'^{(h',e')}$.
\end{lemma}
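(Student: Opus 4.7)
The plan is to induct on the length of a sequence of Reidemeister moves from $D$ to $D'$, reducing the problem to the case where $D'$ arises from $D$ by a single Reidemeister move $R$ taking place inside some small local region $U$ of the interval.

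If the image of the embedding $e$ is disjoint from $U$, then $(h,e)$ can be viewed verbatim as a clasper for $D'$: the element $h$ is unchanged so the degree stays $n$, and $D^{(h,e)}$ and $D'^{(h,e)}$ differ only by $R$ applied away from the clasper, hence are Reidemeister-equivalent. This immediately gives the required $(h',e')=(h,e)$ in this situation.

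In general, the first step would be to push all endpoints of the clasper strands that happen to lie inside $U$ out of $U$ by a sequence of second Reidemeister moves performed on $D^{(h,e)}$. This is precisely the technique used in step (1) of the proof of Proposition \ref{anotherdef} and summarized in Figures \ref{$L_n$ and RI}--\ref{$L_n$ and RIII}: any endpoint of a clasper arrow sitting inside $U$ can be slid across the relevant arrows of $D$ by creating and cancelling pairs of arrows via RII, so that the net effect on the Gauss diagram up to equivalence is purely a change of embedding. The result is a clasper $(h,e'')$ for $D$ with image disjoint from $U$ and with the same underlying element $h$, so the previous paragraph applies and produces the desired $(h',e')$ on $D'$.

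The key technical point is that the pushing procedure only alters the embedding $e$ and never touches the element $h \in \Gamma_n(PV_k) \smallsetminus \Gamma_{n+1}(PV_k)$, so the degree is automatically preserved throughout; in particular, there is no danger of inadvertently falling into $\Gamma_{n+1}(PV_k)$ and losing degree exactness. The main obstacle is therefore not conceptual but combinatorial: one needs careful bookkeeping across the three Reidemeister move types (RI, RII, RIII) and across the various possible configurations of clasper strand endpoints relative to the support region $U$. Since each such configuration reduces to the already-analyzed local pictures in the proof of Proposition \ref{anotherdef}, no new input from the lower central series of $PV_k$ is required.
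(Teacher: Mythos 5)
Your proof is correct, but it is not the route the paper takes. The paper's own proof is a two-line deduction: since $D' \sim D$ and $D \overset{L_n}{\sim} D^{(h,e)}$, transitivity of $L_n$-equivalence gives $D' \overset{L_n}{\sim} D^{(h,e)}$, and Proposition \ref{anotherdef} then supplies a single clasper $(h',e')$ of degree $n$ for $D'$ with $D'^{(h',e')} \sim D^{(h,e)}$. Your argument is instead the direct combinatorial one --- induction on the length of the Reidemeister sequence, and, for a single move, pushing the clasper endpoints out of the support of the move via RII insertions --- which the paper itself acknowledges as an alternative in the remark immediately following the lemma, pointing to exactly the figures you invoke. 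What the paper's route buys is brevity: all the combinatorial work is already packaged in step (1) of the proof of Proposition \ref{anotherdef}. What your route buys is that the braid element $h$ is literally unchanged throughout, so exactness of the degree (that $h \notin \Gamma_{n+1}(PV_k)$) is manifest, whereas the route through Proposition \ref{anotherdef} replaces the clasper by tensor products and one must check separately that the degree does not drop. Note that your argument inherits the same level of pictorial rigor as the paper's: commuting a clasper endpoint past the arrows created or destroyed by the move is an equivalence only because those arrows occur in the special RI/RII/RIII configurations, which is precisely what the cited figures establish, so your proof is neither weaker nor stronger than what the paper itself relies on.
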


\begin{proof}
Since $D' \sim D$ and $D \overset{L_n}{\sim} D^{(h,e)}$, we have that $D' \overset{L_n}{\sim} D^{(h,e)}$.
It is from Proposition \ref{anotherdef} that there is an embedded pure virtual braid $(h',e')$ of degree $n$ for $D'$ such that $D^{(h,e)} {\sim} D'^{(h',e')}$.
\end{proof}

\begin{remark}
We can show Lemma \ref{clasperproperty} directly.   
If $D'$ is obtained from $D$ by Reidemeister move RI, RII or RIII, then given an embedded pure virtual braid $(h,e)$ for $D$ we can construct the pair $(h',e')$ such that $D^{(h,e)}$ is equivalent to $D'^{(h',e')}$ by similar method of Figure \ref{$L_n$ and RI}, \ref{$L_n$ and RII} and \ref{$L_n$ and RIII} in the proof of Proposition \ref{anotherdef}.
\end{remark}

The next proposition is well-known fact of group theory.
\begin{proposition}\label{2.25}
Let $G$ be a group. Let $x$ and $y$ be elements in the $n$-th and $n'$-th lower central subgroup of $G$, respectively. 
Then the commutator $[x, y]$ of $x$ and $y$ is in $(n+n')$-th lower central subgroup of $G$.
\end{proposition}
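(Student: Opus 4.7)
The plan is to prove the stronger containment $[\Gamma_n(G), \Gamma_{n'}(G)] \subseteq \Gamma_{n+n'}(G)$, which immediately implies the statement. I proceed by induction on $n'$. The base case $n' = 1$ is immediate from the definition $\Gamma_{n+1}(G) = [\Gamma_n(G), G]$: for $x \in \Gamma_n(G)$ and $y \in G = \Gamma_1(G)$, the commutator $[x,y]$ lies in $\Gamma_{n+1}(G) = \Gamma_{n+n'}(G)$.

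For the inductive step, fix $n' \geq 2$ and assume the containment holds for all pairs $(m, m')$ with $m' < n'$. Since $\Gamma_{n'}(G)$ is generated as a subgroup by commutators $[u,v]$ with $u \in \Gamma_{n'-1}(G)$ and $v \in G$, every $y \in \Gamma_{n'}(G)$ is a finite product of such generators and their inverses. The identity $[x, ab] = [x,a] \cdot a[x,b]a^{-1}$, combined with the normality of $\Gamma_{n+n'}(G)$ in $G$, reduces the problem to showing $[x, [u,v]] \in \Gamma_{n+n'}(G)$ for each such generator. I then apply the Hall--Witt identity
\begin{equation*}
[[u^{-1}, v], x]^u \cdot [[x^{-1}, u], v]^x \cdot [[v^{-1}, x], u]^v = 1,
\end{equation*}
and solve for $[x, [u,v]]$ as a product of conjugates and inverses of the two terms $[[x,u], v]$ and $[[x,v], u]$. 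The inductive hypothesis applied to $(n, n'-1)$ gives $[x,u] \in \Gamma_{n+n'-1}(G)$, whence $[[x,u], v] \in [\Gamma_{n+n'-1}(G), G] = \Gamma_{n+n'}(G)$; and the definition gives $[x,v] \in \Gamma_{n+1}(G)$, after which the inductive hypothesis applied to $(n+1, n'-1)$ yields $[[x,v], u] \in \Gamma_{n+n'}(G)$. Normality of $\Gamma_{n+n'}(G)$ absorbs all the conjugations.

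The main obstacle is the bookkeeping required to extract $[x,[u,v]]$ from the Hall--Witt identity: it does not literally equate this to a single product $[[x,u],v] \cdot [[x,v],u]^{-1}$, but rather to a product of conjugates together with inner inversions such as $u^{-1}, v^{-1}, x^{-1}$. Each such modification keeps the relevant element inside the same lower central subgroup, since both inversion and conjugation by elements of $G$ preserve $\Gamma_m(G)$, so the inductive argument goes through. This is a standard group-theoretic fact and the proposition is stated here only for reference in the topological arguments that follow, where it will be used to control the degrees of nested claspers.
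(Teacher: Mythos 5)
Your argument is correct, but note that the paper itself offers no proof of this proposition at all: it is stated with the remark that it is a ``well-known fact of group theory'' and is then used as a black box in Lemma \ref{slidedef} and Proposition \ref{5.8}. What you have written is the standard proof of the containment $[\Gamma_n(G),\Gamma_{n'}(G)]\subseteq\Gamma_{n+n'}(G)$ via induction on $n'$, the expansion identity $[x,ab]=[x,a]\cdot a[x,b]a^{-1}$, and the Hall--Witt identity (equivalently, the three subgroups lemma applied with the normal subgroup $N=\Gamma_{n+n'}(G)$), and it supplies exactly the detail the paper elides. The two delicate points are both handled properly: the induction is on the second index only, with the hypothesis invoked for the pairs $(n,n'-1)$ and $(n+1,n'-1)$, which is legitimate since the statement is quantified over all first indices; and the discrepancy between $[x,[u,v]]$ and the literal terms appearing in the Hall--Witt identity is absorbed by the observation that $\Gamma_{n+n'}(G)$ is normal and closed under inversion, so conjugates and the inner inversions $u^{-1},v^{-1},x^{-1}$ do not affect membership. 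One could shorten the write-up by quoting the three subgroups lemma directly rather than manipulating the Hall--Witt identity by hand, but the content is the same and the proof is complete.
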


\begin{lemma}\label{slidedef}
Let $D$ be a Gauss diagram. 
Let $n_1$, $n_2 \geq 1$. Let $(h_1,e_1)$ and $(h_2,e_2)$ be disjoint  embedded pure virtual braids for $D$ of degree $n_1$ and  $n_2$, respectively. 
Let $s_i$ be the $i$-th strand of $h_1$ and $t_j$ the $j$-th one of $h_2$.
Suppose that $e_1(s_i)$ and $e_2(t_j)$ are on the same interval, and there is no endpoint of arrows and no embedding of another strands of embedded pure virtual braids on the intervals between embeddings $e_1(s_i)$ and $e_2(t_j)$.
Then, these embeddings may replace each other up to $L_{n_1+n_2}$-equivalence as in Figure \ref{slide}. 
Let $(h_1,e'_1)$ and $(h_2,e'_2)$ be embedded pure virtual braids of degree $n_1$ and $n_2$ obtained from $(h_1,e_1)$ and $(h_2,e_2)$ by replacing $e_1|_{s_i}$ and $e_2|_{t_j}$ as in Figure \ref{slide}. 
Then, there exists an embedded pure virtual braid $(h,e)$ for $D$ of degree $n_1+n_2$ such that $(h,e)$ is disjoint from both $(h_1,e_1)$ and $(h_2,e_2)$ and $(D^{\{(h_1,e_1), (h_2,e_2)\}})^{(h,e)}$ is equivalent to $D^{\{(h_1,e'_1),(h_2,e'_2)\}}$. 

\begin{figure}[h]
  \begin{center} 
  \includegraphics[height=2.cm, scale=1.5]{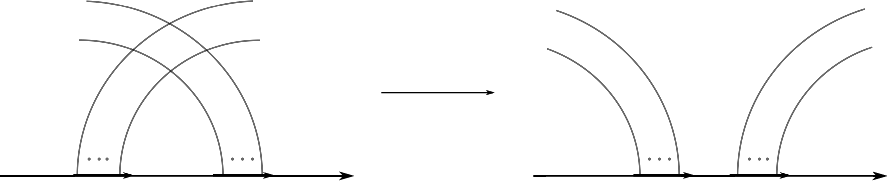} 
  \put(-285,-10){$D$} \put(-140,35){$(h_1,e'_1)$} \put(-285,35){$(h_1,e_1)$} \put(-0,35){$(h_2,e'_2)$} \put(-190,35){$(h_2,e_2)$} \put(-120,-10){$D$} 
  \put(-260,-20){$e_2(t_j)$} \put(-205,-20){$e_1(s_i)$} \put(-85,-20){$e'_1(s_i)$} \put(-45,-20){$e'_2(t_j)$} 
    \caption{A sliding}
    \label{slide}
 \end{center} 
\end{figure}

\end{lemma}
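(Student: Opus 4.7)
The plan is to reduce the swap to a controlled calculation in the pure virtual braid group, so that the compensating clasper emerges as a commutator whose lower-central-series degree is bounded via Proposition \ref{2.25}.

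First, I would combine the two claspers into a single one on $N = k_1 + k_2$ strands. Let $\bar{e}$ be the combined embedding into $D$, and let $\tilde{h}_1, \tilde{h}_2 \in PV_N$ be the images of $h_1, h_2$ under the natural index-inclusion homomorphisms corresponding to the positions of the $s$-strands and $t$-strands inside the combined ordering. Since group homomorphisms preserve the lower central series, $\tilde{h}_1 \in \Gamma_{n_1}(PV_N)$ and $\tilde{h}_2 \in \Gamma_{n_2}(PV_N)$; since $\tilde{h}_1$ and $\tilde{h}_2$ are words in disjoint generator sets, the commutation relation of the presentation gives $\tilde{h}_1\tilde{h}_2 = \tilde{h}_2\tilde{h}_1$, and the original diagram $D^{\{(h_1,e_1),(h_2,e_2)\}}$ is $D^{(\tilde{h}_1\tilde{h}_2,\bar{e})}$. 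Up to the kind of second-Reidemeister flexibility exploited in the proof of Proposition \ref{anotherdef}, the swapped configuration equals $D^{(\sigma(\tilde{h}_1\tilde{h}_2),\bar{e})}$, where $\sigma$ is the index-transposition automorphism of $PV_N$ exchanging the two labels assigned to $s_i$ and $t_j$.

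Second, and this is the heart of the argument, I would identify the correction $c := \sigma(\tilde{h}_1\tilde{h}_2)\cdot(\tilde{h}_1\tilde{h}_2)^{-1}$ with an element of $\Gamma_{n_1+n_2}(PV_N)$. Decomposing $\tilde{h}_1$ into its $s_i$-incident and $s_i$-free factors, and similarly decomposing $\tilde{h}_2$ at $t_j$, the $\sigma$-fixed parts contribute nothing, and $c$ telescopes to a product of commutators of the $s_i$-incident part of $\tilde{h}_1$ with the $t_j$-incident part of $\tilde{h}_2$. These restricted factors still lie in $\Gamma_{n_1}$ and $\Gamma_{n_2}$ respectively, so Proposition \ref{2.25} yields $c \in \Gamma_{n_1+n_2}(PV_N)$. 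Taking $(h,e)$ to be the clasper determined by $c$ with an embedding concentrated near the swap site, and invoking Proposition \ref{anotherdef}, gives $(D^{\{(h_1,e'_1),(h_2,e'_2)\}})^{(h,e)} \sim D^{\{(h_1,e_1),(h_2,e_2)\}}$, as required.

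The main obstacle will be the commutator identification in the second step: the naive commutator $[\tilde{h}_1,\tilde{h}_2]$ vanishes because of the disjoint supports, so one must genuinely work with the refined decomposition and use the mixed braid relation $\mu_{ij}\mu_{il}\mu_{jl}=\mu_{jl}\mu_{il}\mu_{ij}$ to push the $s_i$-factors past the $t_j$-factors one generator at a time. Tracking this telescoping carefully, while preserving the lower-central-series degree at each step and keeping the ambient embedding consistent, is the delicate part of the proof.
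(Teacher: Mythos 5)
Your argument breaks down at the second step, and the failure is not reparable within the $(k_1+k_2)$-strand model. Because you keep $s_i$ and $t_j$ as distinct strands, $\tilde h_1$ and $\tilde h_2$ commute, and the correction you must control is $c=\sigma(\tilde h_1\tilde h_2)(\tilde h_1\tilde h_2)^{-1}$; but this element is in general not in $\Gamma_{n_1+n_2}(PV_{k_1+k_2})$, so no telescoping can succeed. Concretely, take $h_1=h_2=\mu_{12}\in PV_2$ (so $n_1=n_2=1$) with subintervals ordered $e_1(s_1),e_1(s_2),e_2(t_1),e_2(t_2)$ and $i=2$, $j=1$. Then $\tilde h_1=\mu_{12}$ and $\tilde h_2=\mu_{34}$ in $PV_4$, $\sigma$ is the transposition of strands $2$ and $3$, and $c=\mu_{13}\mu_{24}\mu_{34}^{-1}\mu_{12}^{-1}$. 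Its image in the abelianization of $PV_4$, which is free abelian on the generators $\mu_{ij}$, is nonzero, so $c\notin\Gamma_2(PV_4)$: the correction clasper your construction produces has degree $1$, not degree $n_1+n_2=2$. (Your proposed telescoping also relies on splitting an element of $\Gamma_{n_1}$ into an ``$s_i$-incident'' factor and an ``$s_i$-free'' factor each still lying in $\Gamma_{n_1}$, which is not available in general.)

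The missing idea, which is how the paper proceeds, is to merge the two adjacent subintervals $e_1(s_i)$ and $e_2(t_j)$ into one, i.e.\ to include $h_1$ and $h_2$ into $PV_{k_1+k_2-1}$ so that $s_i$ and $t_j$ become the \emph{same} strand; the hypothesis that no arrow endpoint or other clasper strand lies between $e_1(s_i)$ and $e_2(t_j)$ is exactly what permits this. Then $\tilde h_1$ and $\tilde h_2$ genuinely fail to commute, the unswapped and swapped configurations are precisely $D^{(\tilde h_1\cdot\tilde h_2,e)}$ and $D^{(\tilde h_2\cdot\tilde h_1,e)}$ (the slide changes only the order of the two blocks of arrow endpoints on the shared strand), and the correction is the honest commutator $h=[\tilde h_2,\tilde h_1]$, which lies in $\Gamma_{n_1+n_2}$ immediately by Proposition \ref{2.25}. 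In the example above this yields $[\mu_{23},\mu_{12}]\in\Gamma_2(PV_3)$, as it should. One further case your sketch omits but the paper treats: when $s_i$ and $t_j$ are embedded with opposite orientations relative to the interval of $D$, one must first replace $\tilde h_2$ by its mirror image before forming the commutator.
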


We call this transformation between two embedded pure virtual braids a {\it sliding}.

\begin{proof}
For given embedded pure virtual braids $(h_1,e_1)$ and $(h_2,e_2)$ with degree $n_1$ and $n_2$, respectively, we will construct an embedded pure virtual braid $(h,e)$ of degree $n_1+n_2$.
We consider the case both of the orientations of $e_1(s_i)$ and $e_2(t_j)$ are compatible with the orientation of an interval of $D$ and $e_1(s_i)$ is under than $e_2(t_j)$ (in the case of Figure \ref{slide}). 
In the other cases, we may construct it similarly.

First of all, we construct a pure virtual braid $h$.
Let $h_1 \in \Gamma_{n_1}(PV_{k_1})$, $h_1 \notin \Gamma_{n_1+1}(PV_{k_1})$, $h_2 \in \Gamma_{n_2}(PV_{k_2})$ and $h_2 \notin \Gamma_{n_2}(PV_{k_2+1})$.
Then, we define $\tilde{h}_1$ as an element of $\Gamma_{n_1}(PV_{k_1+k_2-1})$ obtained from $h_1$ by adding $j-1$ strands before the 1st strand of $h_1$ and $k_2-j$ strands after the $k_1$-th strand of $h_1$ and $\tilde{h}_2$ as an element of $\Gamma_{n_2}(PV_{k_1+k_2-1})$ obtained from $h_2$ by adding $i-1$ strands between the $(j-1)$-th and $j$-th strand of $h_2$, and 
$k_1-i$ strands between the $j$-th and $(j+1)$-th strand of $h_2$. 
Then the $i$-th strand of $h_1$ and $j$-th strand of $h_2$ are the same order $(i+j-1)$th in $\tilde{h}_1$ and $\tilde{h}_2$.
We define $h=[\tilde{h}_1, \tilde{h}_2]$, which is in $\Gamma_{n_1+n_2}(PV_{k_1+k_2-1})$ by Proposition \ref{2.25}. 


Secondly, we construct an embedding $e$. We define embeddings $\tilde{e}_1$ of $\tilde{h}_1$ and $\tilde{e}_2$ of $\tilde{h}_2$ as follows.
For each strand of $\tilde{h}_1$ (or $\tilde{h}_2$, respectively) derived from $h_1$ (or $h_2$, respectively),
 $\tilde{e}_1$ (or $\tilde{e}_2$, respectively) is the same as $e_1$  (or $e_2$, respectively), 
 and for each other strand of $\tilde{h}_1$ (or $\tilde{h}_2$, respectively), $\tilde{e}_1$ (or $\tilde{e}_2$, respectively) is under (or upper, respectively) parallel embedding to $e_2$ (or $e_1$, respectively). 
Then, $\tilde{e}_1$ is under parallel to $\tilde{e}_2$ and $D^{\{(h_1,e_1),(h_2,e_2)\}} \sim D^{\{(\tilde{h}_1,\tilde{e}_1),(\tilde{h}_2, \tilde{e}_2)\}}$.
Moreover, we define  an embedding $e$ of $h$ as the upper parallel embedding to $\tilde{e}_2$. 
Then, $D^{\{(\tilde{h}_1,\tilde{e}_1),(\tilde{h}_2, \tilde{e}_2)\}} \overset{L_n}{\sim} D^{\{(\tilde{h}_1,\tilde{e}_1),(\tilde{h}_2, \tilde{e}_2), (h,e)\}} \sim D^{(h_1 \cdot \tilde{h}_2 \cdot \tilde{h}_1,e)} \sim D^{(\tilde{h}_1 \cdot \tilde{h}_2,e)} \sim D^{(h_1, e'_1), (h_2, e'_2)}$. 
Similarly, we can construct $(h,e)$.
\end{proof}

\begin{definition}
A Gauss diagram $D$ is  {\it $L_n$-trivial} if $D$ is $L_{n}$-equivalent to the trivial Gauss diagram. 
\end{definition}

\begin{proposition}\label{5.8}
Let $n, n' \geq 1$.
Let $D$ be an $L_n$-trivial Gauss diagram and $D'$ be an $L_{n'}$-trivial one.
Then the Gauss diagram $D \cdot D'$ is $L_{n+n'}$-equivalent to $D' \cdot D$.
\end{proposition}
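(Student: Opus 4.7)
The plan is to reduce the proposition to repeated applications of the sliding lemma, Lemma \ref{slidedef}.

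First I would use Proposition \ref{anotherdef} together with the $L_n$-triviality of $D$ to produce a clasper $(h,e)$ of degree $n$ on the trivial Gauss diagram $D_0$ with $D_0^{(h,e)} \sim D$, and similarly a clasper $(h',e')$ of degree $n'$ with $D_0^{(h',e')} \sim D'$. Since the trivial diagram is carried to itself under composition, both $D \cdot D'$ and $D' \cdot D$ can be realized, up to Reidemeister moves, as a single trivial interval to which the two disjoint claspers $(h,e)$ and $(h',e')$ have been attached; the two cases differ only in which clasper sits to the left and which to the right along the interval.

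Next I would transform one configuration into the other by sliding the strands of $(h,e)$ past those of $(h',e')$ one adjacent pair at a time. By Lemma \ref{slidedef}, each such elementary sliding introduces an additional clasper of degree $n+n'$ while preserving the degrees of the two original claspers. After at most $k \cdot k'$ slidings, where $k$ and $k'$ are the numbers of strands of $h$ and $h'$ respectively, the two claspers have exchanged positions and the accumulated corrections form a disjoint union of claspers of degree $n+n'$. By the remark following Proposition \ref{anotherdef}, a disjoint union of claspers all of degree $n+n'$ realizes an $L_{n+n'}$-equivalence. Hence $D \cdot D' \overset{L_{n+n'}}{\sim} D' \cdot D$.

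The main obstacle is the scheduling of the strand swaps, since each application of Lemma \ref{slidedef} requires that no other endpoints of arrows or clasper strands lie between the two strands being exchanged. I would fix a definite order (for example, always swapping the rightmost strand of $(h,e)$ with the leftmost remaining strand of $(h',e')$) so that this adjacency condition is automatically met at each stage, possibly after a harmless isotopy of the embedded strands within the interval that does not alter any clasper's degree. Once this bookkeeping is in place, each step is a direct invocation of the sliding lemma and the conclusion follows.
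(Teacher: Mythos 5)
Your proposal is correct and follows essentially the same route as the paper: the paper's proof likewise writes $D$ and $D'$ as claspers $(h,e)$, $(h',e')$ of degrees $n$ and $n'$ attached to the trivial diagram and then invokes Lemma \ref{slidedef} to exchange them, with the degree-$(n+n')$ correction claspers realizing the $L_{n+n'}$-equivalence. Your version merely makes explicit the iteration over strand pairs (and the scheduling of swaps) that the paper compresses into a single citation of the sliding lemma.
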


\begin{proof}
By assumption and Proposition \ref{anotherdef}, there are two embedded pure virtual braids $(h,e)$ and $(h',e')$ of degree $n$ and $n'$ such that $D_0^{(h,e)} \sim D$ and $D_0^{(h',e')} \sim D'$, respectively.  
Then by Lemma \ref{slidedef} we have $D \cdot D' \sim D_0^{(h,e)} \cdot D_0^{(h',e')} \overset{L_{n+n'}}{\sim}  D_0^{(h',e')}  \cdot D_0^{(h,e)} \sim D' \cdot D$.
\end{proof}

\begin{proposition}\label{1.39}
For any $L_n$-trivial Gauss diagram $D$, there is an $L_n$-trivial Gauss diagram $D'$ such that both $D \cdot D'$ and $D' \cdot D$ are $L_{2n}$-trivial. 
\end{proposition}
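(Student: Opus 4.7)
The plan is to construct $D'$ explicitly from the clasper data realizing $D$. By Proposition \ref{anotherdef}, fix a clasper $(h,e)$ of degree $n$ with $h\in\Gamma_n(PV_k)$ and $D_0^{(h,e)}\sim D$. Since $\Gamma_n(PV_k)$ is a subgroup of $PV_k$, $h^{-1}\in\Gamma_n(PV_k)$, so I would set
\[
D'\;:=\;D_0^{(h^{-1},e')},
\]
for a companion embedding $e'$ of $k$ strands into $D_0$ whose strand order and orientations are compatible with those of $e$. Then $D'$ is manifestly $L_n$-trivial.

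To show $D\cdot D'$ is $L_{2n}$-trivial, I would first use $D_0\cdot D_0=D_0$ to write $D\cdot D'\sim D_0^{\{(h,e),(h^{-1},e')\}}$, with one clasper on each half of the composed interval. The core step is to bring the two claspers into a stacked configuration by repeatedly invoking Lemma \ref{slidedef}: slide the strands of $(h^{-1},e')$ one at a time past the intervening strands of $(h,e)$ so that the $i$-th strand of $h^{-1}$ comes to lie immediately adjacent to the $i$-th strand of $h$. Each elementary sliding moves a degree-$n$ strand past another degree-$n$ strand, hence by Lemma \ref{slidedef} together with Proposition \ref{2.25} it introduces a residual clasper of degree $2n$. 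After finitely many slidings the paired claspers can be read as a single clasper whose braid element is the trivial braid $h\cdot h^{-1}=1\in\Gamma_n(PV_k)$; by exactly the reasoning used for the symmetric relation in the proof that $L_n$-equivalence is an equivalence relation, this trivial clasper contributes no new arrows and the paired portion collapses to $D_0$ via a sequence of second Reidemeister moves. The residual degree-$2n$ claspers are all that remain, witnessing $D\cdot D'\overset{L_{2n}}{\sim}D_0$.

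The case of $D'\cdot D$ runs identically, with the stacked pair now realizing $h^{-1}\cdot h=1$. The main obstacle I anticipate is the orientation bookkeeping inside the sliding procedure: Lemma \ref{slidedef} splits into two cases according to whether the two sliding strands both agree (or both disagree) with the orientation of the interval or are mixed (in the mixed case a mirror image intervenes), and I would need to ensure that after all slidings the paired braid really evaluates to $h\cdot h^{-1}$ in $PV_k$ rather than some mirrored or conjugated relative. Fixing $e'$ to match the orientation pattern of $e$ from the outset, and performing the slidings in a prescribed order (say, sliding the strands of $(h^{-1},e')$ from right to left so that their relative order is preserved), should suppress this issue and reduce the remaining verification to a routine tracking of strand data.
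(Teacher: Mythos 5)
Your proposal is correct and follows essentially the same route as the paper: the paper also sets $D' = D_0^{(h^{-1},e)}$ and invokes Lemma \ref{slidedef} to obtain $D \cdot D' \sim D_0^{(h,e)} \cdot D_0^{(h^{-1},e)} \overset{L_{2n}}{\sim} D_0^{(h \cdot h^{-1},e)} \sim D_0$. Your more detailed account of the strand-by-strand sliding and the orientation bookkeeping is a faithful unpacking of what the paper compresses into a single application of that lemma.
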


\begin{proof}
By assumption and Proposition \ref{anotherdef}, there is an embedded pure virtual braid $(h,e)$ of degree $n$ such that $D_0^{(h,e)} \sim D$.
We define $D'=D_0^{(h^{-1},e)}$. 
Then by Lemma \ref{slidedef} we have $D \cdot D' \sim D_0^{(h,e)} \cdot D_0^{(h^{-1},e)} \overset{L_{2n}}{\sim} D_0^{(h \cdot h^{-1},e)} \sim D_0$ and similarly we have $D' \cdot D \overset{L_{2n}}{\sim} D_0$. 
\end{proof}

\begin{notation} 
The set $\mathcal{VSL}(k)$ of equivalence classes of Gauss diagrams on $k$ strands has a monoid structure under the composition for any positive integer $k$. 
For $n \geq 1$, let $\mathcal{VSL}_n(k)$ denote the submonoid of $\mathcal{VSL}(k)$ consisting of the equivalence classes of Gauss diagrams on $k$ strands which are $L_n$-trivial.
There is a descending filtration of monoids 
\[ \mathcal{VSL}(k) = \mathcal{VSL}_1(k) \supset \mathcal{VSL}_2(k)  \supset \mathcal{VSL}_3(k)  \supset \cdots. \]

For $l \geq n$,  $\mathcal{VSL}_n(k) /  {L_l}$ denotes the quotient of $\mathcal{VSL}_n(k)$ by $L_l$-equivalence.
It is easy to see that the monoid structure on $\mathcal{VSL}_n(k)$ induces that of $\mathcal{VSL}_n(k) / {L_l}$.
There is a filtration on $\mathcal{VSL}_n(k) / {L_l}$ of finite length 
\[ \mathcal{VSL}/ {L_l} = \mathcal{VSL}_1(k)/ {L_l} \supset \mathcal{VSL}_2(k)/  {L_l}  \supset \mathcal{VSL}_3(k)/ {L_l}  \supset \cdots   \supset \mathcal{VSL}_l(k)/  {L_l}=\{1\}. \]
\end{notation}

\begin{lemma}\label{abelian}
For $n, k\geq 1$, the monoid $\mathcal{VSL}_n(k)/ {L_{l}}$ $(1 \leq l \leq 2n)$ is an abelian group.
\end{lemma}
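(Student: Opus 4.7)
The plan is to assemble the group structure directly from the two preceding propositions. The monoid structure on $\mathcal{LVK}_n/L_l$ is inherited, so the work is to produce commutativity and two-sided inverses, both modulo $L_l$-equivalence. The key observation is that the hypothesis $l \le 2n$ combined with the earlier proposition ``$L_{n'}$-equivalence implies $L_n$-equivalence whenever $n \le n'$'' lets us downgrade any $L_{2n}$-statement to the desired $L_l$-statement.

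First I would establish commutativity. Take $[D], [D'] \in \mathcal{LVK}_n/L_l$, represented by $L_n$-trivial Gauss diagrams $D$ and $D'$. By Proposition \ref{5.8} applied with $n=n'$, we have $D \cdot D' \overset{L_{2n}}{\sim} D' \cdot D$. Since $l \le 2n$, $L_{2n}$-equivalence implies $L_l$-equivalence, so $[D \cdot D'] = [D' \cdot D]$ in $\mathcal{LVK}_n/L_l$.

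Next I would produce inverses. Given an $L_n$-trivial $D$, Proposition \ref{1.39} supplies an $L_n$-trivial $D'$ such that both $D \cdot D'$ and $D' \cdot D$ are $L_{2n}$-trivial, i.e.\ $L_{2n}$-equivalent to the trivial diagram $D_0$. Again using $l \le 2n$, this gives $D \cdot D' \overset{L_l}{\sim} D_0 \overset{L_l}{\sim} D' \cdot D$, so $[D']$ is a two-sided inverse of $[D]$ in $\mathcal{LVK}_n/L_l$. Crucially, $D'$ itself is $L_n$-trivial, hence $[D'] \in \mathcal{LVK}_n/L_l$ rather than just in $\mathcal{LVK}/L_l$, which is what keeps us inside the submonoid. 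Combined with the associativity and identity element inherited from the monoid structure, these two facts show $\mathcal{LVK}_n/L_l$ is an abelian group.

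I do not expect any serious obstacle: the proof is essentially a bookkeeping argument once Propositions \ref{5.8} and \ref{1.39} are in hand. The only subtle point to state carefully is that both propositions yield $L_{2n}$-level equivalences, and that the bound $l \le 2n$ in the hypothesis is precisely what is needed to transport these conclusions into the quotient $\mathcal{LVK}_n/L_l$; outside that range the commutator and inverse arguments would land in a finer equivalence than we are quotienting by, so neither commutativity nor the existence of inverses would be guaranteed.
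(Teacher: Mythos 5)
Your proof is correct and follows essentially the same route as the paper: Proposition \ref{1.39} for two-sided inverses and Proposition \ref{5.8} for commutativity, with the hypothesis $l \le 2n$ used to pass from $L_{2n}$-equivalence to $L_l$-equivalence. The paper states the argument only at the $L_{2n}$ level and leaves that last downgrading step implicit, so your version is if anything slightly more explicit.
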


\begin{proof} 
By Proposition \ref{1.39}, for any $L \in \mathcal{VSL}_n(k)$ there exists $L' \in \mathcal{VSL}_n(k)$ such that both $L \cdot L'$ and $L \cdot L'$ are trivial up to $L_{2n}$-equivalence, 
and therefore the monoid $\mathcal{VSL}_n(k)/ {L_{2n}}$ is a group.
By Proposition \ref{5.8}, for any $L_1, L_2 \in \mathcal{VSL}_n(k)$, $L_1 \cdot L_2 $ is $L_2 \cdot L_1$ up to  $L_{2n}$-equivalence, and the group $\mathcal{VSL}_n(k)/  {L_{2n}}$ is abelian. 
It follows from Proposition \ref{movedegree} that it holds for $1 \leq l < 2n$. 
\end{proof}

\begin{proposition}\label{5.4}
The monoid $\mathcal{VSL}_n(k)/ {L_{l}}$ is a nilpotent group for any positive integers $k$, $n$ and $l$. 
\end{proposition}

\begin{proof}
We fix $l$ and prove it by induction on $n$.
By Proposition \ref{movedegree}, it is obvious for $n \geq l$.
Assume that $\mathcal{VSL}_{n+1}(k)/ {L_{l}}$ is a group for some $n$ with $1 \leq n \leq l$. We then have a short exact sequence of monoids:
\[1 \rightarrow \mathcal{VSL}_{n+1}(k)/ {L_{l}} \rightarrow \mathcal{VSL}_{n}(k)/ {L_{l}} \rightarrow \mathcal{VSL}_{n}(k)/ {L_{n+1}}   \rightarrow 1. \]
Here, $\mathcal{VSL}_{n+1}(k)/ {L_{l}}$ and $\mathcal{VSL}_{n}(k)/ {L_{n+1}} $ are groups by the assumption of induction and Lemma \ref{abelian}.
Therefore $\mathcal{VSL}_{n}(k)/ {L_{l}}$ is also a group. 
Moreover, it is from Proposition \ref{5.8} that $[\mathcal{VSL}_n(k)/ {L_{l}}, \mathcal{VSL}_{n'}(k)/  {L_{l}}] \subset  \mathcal{VSL}_{n+n'}(k)/ {L_{l}}$ for any $n, n', l \geq 1$
Therefore, $\mathcal{VSL}_n(k)/ {L_{l}}$ is nilpotent. 
\end{proof}

\section{$L_n$-equivalence and $n$-equivalence}

In this section, we introduce the $n$-equivalence for Gauss diagrams on several strands defined by Goussarov-Polyak-Viro \cite{GPV} and prove that the $L_n$-equivalence coincides with $(n-1)$-equivalence on virtual string links. 

\begin{definition}\cite{GPV}\label{trivial}
Let $n \geq 0$. A Gauss diagram $D$ on several strands is said to be {\it $n$-trivial} (with respect to $A_1, A_2, \cdots , A_{n+1}$) if the Gauss diagram satisfies the following condition. 
There exist $n+1$ non-empty disjoint subsets $A_1, A_2, \cdots , A_{n+1}$ of the set of arrows of $D$ such that 
for any non-empty subfamily $S$ of the set $\{A_1, A_2, \cdots , A_{n+1}\}$ the Gauss diagram obtained from $D$ by  removing all arrows which belongs to $S$ is trivial up to a sequence of second Reidemeister moves.

A Gauss diagram $D_2$ is related to a Gauss diagram $D_1$  by {\it $n$-variation} if $D_2$ is obtained from $D_1$ by attaching an $(n-1)$-trivial Gauss diagram on several strands to segments of $D_1$ without endpoints of any arrow.  
Two Gauss diagrams are said to be  {\it $n$-equivalent} if they are related by a sequence of $(n+1)$-variations and Reidemeister moves.
\end{definition}

In order to prove the next theorem (Theorem~\ref{L_nandn-equ}), we prepare the following definition and two  lemmas.

We define a {\it weight} for an embedded pure virtual braid $(h,e)$, which is a finite subset of $\mathbb{N}$, and denote it by $w(h,e)$.
Let $H$ be a finite set of embedded pure virtual braids with weights. Let $I$ be a finite subset of $\mathbb{N}$. Then $H(I)$ denote the subset of $H$ each element of which has a subset of $I$ as a weight,  
and $H_I$ denote the subset of $H(I)$ each element of which has $I$ as a weight.

We define the weight of new embedded pure virtual braids obtained by sliding in Lemma \ref{slidedef} as follows.
When we slide two pairs $(h_1,e_1)$ and $(h_2,e_2)$ with weights, we define the weight of the deformed pairs $(h_1,e'_1)$, $(h_2,e'_2)$ and the new embedded pure virtual braid $(h,e)$ as $w(h_1,e_1)$, $w(h_2,e_2)$ and the union of these two weight, respectively.    
Then it is easy to see that $|w(h,e)|\leq  |w(h_1,e_1)| + |w(h_2,e_2)|$.



\begin{lemma}\label{slidingweignt}
Let $H$ and $H'$ be sets of finite embedded pure virtual braids with weights for a Gauss diagram $D$ on several strands such that they are related by sliding in Lemma \ref{slidedef} (this is, $D^H \sim D^{H'}$).
Then $D^{H(I)}$ is equivalent to $D^{H'(I)}$ for any subset $I$ of $\mathbb{N}$. 
\end{lemma}

\begin{proof}
Let $(h_1,e_1)$, $(h_2,e_2)$, $(h_1,e'_1)$, $(h_2,e'_2)$ and $(h,e)$ be embedded pure virtual braids for a Gauss diagram $D$ on several strands in Lemma \ref{slidedef}.
Suppose that $(h_1,e_1)$ and $(h_2,e_2)$ have weights.
Then, by the definition of their weights by sliding, $D^{\{(h_1,e'_1), (h_2,e'_2)\}(I)} \sim D^{\{(h_1,e_1), (h_2,e_2),  (h,e) \}(I)}$ for any subset $I$ of $\mathbb{N}$. 
\end{proof}

\begin{lemma}\label{weigntdegree}
(1) Let $D$ and $D'$ be Gauss diagrams on several strands related by RI (RII or RIII, respectively).  
Let $H=\{(h_i,e_i)\}_i$ be a set of finite embedded pure virtual braids with weights for $D$.
We then have a set $H'=\{(h'_i,e'_i)\}_i$ of finite embedded pure virtual braids with weights for $D'$ such that $D^{H(I)}$ is equivalent to ${D'}^{H'(I)}$ up to a sequence of RI (RII or RIII, respectively) and RII's for any subset $I$ of $\mathbb{N}$ and the degrees of $(h_i,e_i)$ and $(h'_i,e'_i)$ are the same for any $i$.

In particular, (2) if there exist disjoint sets $A_1, A_2, \cdots , A_{n+1}$ of arrows such that $H=A_1 \cup A_2 \cup \cdots \cup A_{n+1}$ as a set of arrows and $D^H$ has $n$-triviality with respect to $A_1, A_2, \cdots , A_{n+1}$, then there exist disjoint sets $A'_1, A'_2, \cdots , A'_{n+1}$ of arrows such that $H'=A'_1 \cup A'_2 \cup \cdots \cup A'_{n+1}$ as a set of arrows and ${D'}^{H'}$ has $n$-triviality with respect to $A'_1, A'_2, \cdots , A'_{n+1}$.  
\end{lemma}

\begin{proof}
(1) By using the method of (1) in the proof of Proposition \ref{anotherdef}, we can construct $H'$ from $H$.
Here the relation between $(h_i,e_i)$ and $(h'_i,e'_i)$ in  the proof of Lemma \ref{anotherdef} corresponds to that in Proposition \ref{weigntdegree}.
In the proof of Proposition \ref{anotherdef}, the degree of $(h'_i,e'_i)$ and $(h_i,e_i)$ are the same for any $i$ and we define the weight of $(h'_i,e'_i)$ by the weight of $(h_i,e_i)$.

(2) In particular, we define $H_{\{i\}}=A_i$ and $H(\{1,2, \cdots n+1 \})=H$.
We then have that if $D^H$ has $n$-triviality with respect to $A_1, A_2, \cdots , A_{n+1}$, then ${D'}^{H'}$ has also $n$-triviality with respect to $A'_1, A'_2, \cdots , A'_{n+1}$ where $H'_{\{i\}}=A'_i$ and $H'(\{1,2, \cdots n+1 \})=H'$.
\end{proof}

\begin{theorem}\label{L_nandn-equ}
For any $n \geq 1$, the $L_{n}$-equivalence and ${(n-1)}$-equivalence on virtual string links are equal.
\end{theorem}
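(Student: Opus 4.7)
For the forward direction $L_n\Rightarrow (n-1)$-equivalence, I will use Proposition~\ref{anotherdef} to reduce to realizing a single $L_n$-move (insertion of $h\in\Gamma_n(PV_k)$) as a sequence of $n$-variations. My plan is first to write $h$ as a product $h=c_1\cdots c_m$ of iterated commutators $c=[a_1,[a_2,\ldots [a_{n-1},a_n]\ldots]]$, using the standard fact that such commutators generate $\Gamma_n(PV_k)$ as a subgroup (handling conjugate terms via the identity $[xy,z]=x[y,z]x^{-1}\cdot[x,z]$ and descending induction on class), then insert each $c_i$ successively. I will verify that the Gauss diagram of each single commutator $c$ is $(n-1)$-trivial by taking $A_i$ to be the arrows arising from occurrences of $a_i^{\pm 1}$ in the word expansion of $c$ and arguing by induction on $n$ that erasing any non-empty subfamily collapses $c$ to a nested palindromic word $ww^{-1}$ whose letters cancel pairwise by RII-moves from the innermost outward.

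For the reverse direction, the task is to show that every $(n-1)$-trivial Gauss diagram $T$, viewed as a word in $PV_k$, lies in $\Gamma_n(PV_k)$. I plan to work in the free group $F$ on the generators $\mu_{ij}$ and use the Magnus expansion $\mu\colon F\hookrightarrow \mathbb{Z}\langle\langle X_{ij}\rangle\rangle$, under which $w\in\Gamma_n(F)$ iff $\mu(w)-1$ lies in the ideal of elements of degree $\geq n$. Expanding
\[
\mu(T)-1 \;=\; \sum_{\emptyset\neq S\subseteq E(T)}\ \prod_{\alpha\in S} c_\alpha
\]
(where $E(T)$ is the set of arrows of $T$, the product is taken in word order, and each $c_\alpha$ is the degree-$\geq 1$ part of $\mu$ of the corresponding generator), I group summands by $J(S):=\{j\in\{1,\ldots,n\}\colon S\cap A_j\neq\emptyset\}$ to write $\mu(T)-1=\sum_{J\subseteq\{1,\ldots,n\}} V_J$. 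The $(n-1)$-triviality hypothesis translates, by multiplicativity of $\mu$, into the formal identity $\sum_{J\cap I=\emptyset}V_J=0$ for every non-empty $I\subseteq\{1,\ldots,n\}$. A descending inclusion-exclusion on $I$ then forces $V_J=0$ for every proper subset $J\subsetneq\{1,\ldots,n\}$, so $\mu(T)-1=V_{\{1,\ldots,n\}}$. Since every $S$ appearing in $V_{\{1,\ldots,n\}}$ meets all $n$ classes $A_j$ we have $|S|\geq n$, hence every product of corrections has degree $\geq n$. Therefore $T\in\Gamma_n(F)$, and pushing forward along $F\twoheadrightarrow PV_k$ gives $T\in\Gamma_n(PV_k)$, exhibiting the underlying $n$-variation as an $L_n$-move.

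I expect the reverse direction to be the main obstacle: the ``trivial up to RII'' condition is a statement about freely reducing specific subwords of $T$ rather than a clean quotient condition on $PV_k$, and naive algebraic shortcuts are unavailable. Converting the problem to a formal power series identity via the Magnus expansion and then performing inclusion-exclusion on the index sets $J$ is the crucial step, and I anticipate essentially all of the technical content to reside there.
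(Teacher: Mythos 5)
Your forward direction is fine (and more detailed than the paper, which dismisses it as obvious), but the reverse direction has a genuine gap at its very first step: you cannot, in general, ``view'' an $(n-1)$-trivial Gauss diagram $T$ as a word in the generators $\mu_{ij}$ of $PV_k$. An $n$-variation (Definition~\ref{trivial}) attaches an \emph{arbitrary} Gauss diagram on several strands: its arrows may have both endpoints on the same strand, and the endpoints of distinct arrows may interleave arbitrarily along each strand, so there is no linear ordering of the arrows realizing $T$ as a braid word read top to bottom. Worse, the hypothesis ``trivial up to second Reidemeister moves after deleting $\bigcup_{i\in I}A_i$'' is a statement about the \emph{embedded} diagram: an RII cancellation pairs two arrows whose endpoints become adjacent on the strands after the deletion, which is positional data invisible to any fixed word in $PV_k$. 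Concretely, if you try to repair this by putting one braid strand at each arrow endpoint (so $T$ becomes a product of commuting generators $\mu_{ab}^{\pm}$ with pairwise disjoint index pairs in $PV_{2r}$), then no two generators ever share indices, the word survives in the abelianization, and it lies in no $\Gamma_m$ for $m\geq 2$ --- yet the diagram may well be RII-trivial after embedding. So the identity $\mu(T_I)=1$ that drives your inclusion--exclusion is simply not available, and the Magnus-expansion computation never gets off the ground except in the special case where $T$ happens to be an honest pure virtual braid word and all the required cancellations are free reductions of subwords.

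The paper's proof is built precisely to absorb this geometric content. Each arrow is treated as its own degree-$1$ clasper $(\mu_{12}^{\pm},e)$ on two strands, carrying a weight $i$ when it lies in $A_i$; the key tool is the sliding lemma (Lemma~\ref{slidedef}), which says that interchanging two clasper legs on the interval costs an extra clasper whose degree is at least the sum of the two degrees and whose weight is declared to be the union of the two weights. An induction on $s$ (the minimum size of the weights present) pushes all claspers of weight-size $s$ out of the way, using $(n-1)$-triviality to see that the displaced block is equivalent to the trivial diagram, until every surviving clasper has weight $N=\{1,\dots,n\}$ and hence degree at least $n$; a separate argument using the group structure of $\mathcal{LVK}/L_n$ reduces the general $D$ to the case $D\sim D_0$. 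If you want to salvage your algebraic strategy, you would need to first prove a normalization lemma converting an arbitrary $(n-1)$-trivial attached diagram into a braid-presentable one at the cost of controlled higher-degree corrections --- and that lemma is essentially the sliding argument you were trying to avoid.
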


\begin{proof}
A pure virtual braid $[a_1,[a_2, \cdots [a_{n-1}, a_{n}] \cdots ]]$ has $(n-1)$-triviality with respect to $A_1, A_2, \cdots , A_{n}$ such that $A_i$ is the set of all $a_i$ and $a_i^{-1}$, where $a_i$ is a generator of a group.
Therefore any element of the $n$-th lower central series of the pure virtual braid group has $(n-1)$-triviality.
Therefore we have that if two Gauss diagrams are $L_{n}$-equivalent, then they are ${(n-1)}$-equivalent.
Therefore it suffices to prove that if a Gauss diagram $D'$ is related to $D$ by an $n$-variation then they are $L_{n}$-equivalent.

Let $D_t$ be an $(n-1)$-trivial Gauss diagram on several strands with respect to  $A_1, A_2, \cdots , A_{n}$ such that $D'$ is obtained from $D$ by attaching $D_t$.
Let $D_A$ be the Gauss diagram obtained from $D_t$ by removing all arrows in $A_1\cup\cdots\cup A_n$.
By the property of $(n-1)$-triviality,  $D_A$ equsls to the trivial Gauss diagram $D_0$ up to a sequence of RII's, where $D_0$ has the same number of intervals as $D_t$.
We can consider each arrow in $A_1\cup\cdots\cup A_n$ as an embedded pure virtual braid of degree 1 for $D_A$ and denote their set by $H$, that is $H=A_1\cup\cdots\cup A_n$, and $D_t=D_A^H$.    
It follows from Lemma \ref{weigntdegree}(2) that we have a set $H'=A'_1\cup\cdots\cup A'_n$ of embedded pure virtual braids for $D_0$ with degree 1 such that $H'$ has $(n-1)$-triviality with respect to $A'_1, A'_2, \cdots , A'_{n}$ and $D_t$ equals to $D_0^{H'}$ up to a sequence of RII's. 
Therefore $D'$ is equivalent to a Gauss diagram $D''$ obtained from $D$ by attaching $D_0^{H'}$ to the same segments of $D$ as $D_t$. 
Here, $D''$ can be regarded as $D^{H'}$ by considering $H'$ as the set of embedded pure virtual braids for $D$.  
Then $D \overset{L_n}{\sim} D^{H'}$ shows $D \overset{L_n}{\sim} D'$.
Therefore we show the following claim, which proves the theorem.

\begin{claim}
Let $D$ be a Gauss diagram and $H'=A'_1\cup\cdots\cup A'_n$ the set of embedded pure virtual braids of degree 1 for $D$ with $(n-1)$-triviality with respect to $A'_1, A'_2, \cdots , A'_{n}$. 
Then $D$ is $L_n$-equivalent to $D^{H'}$.
\end{claim}

Let us first prove the case that $D$ is the trivial Gauss diagram $D_0$.
We consider $H'$ as a set of embedded pure virtual braids with weights each element of which assigns $i$ as a weight if it is in $A_i$.
We show the following statement, which proves the claim.

(A) 
For any $s \in N=\{1,2, \cdots ,n\}$, there exists a set $G_s$ of finite embedded pure virtual braids for $D_0$ with weights such that $s \leq |w(h,e)| \leq$ deg$(h,e)$ for each element $(h,e)$ of $G_s$, where $|\cdot|$ means the number of a set, and $D_0^ {G_s(I)} \sim D_0^{H'(I)}$ for every subset $I$ of $N$.


We prove it by induction on $s$ for $s=1, 2, \cdots, n$.
For $s=1$, we can set $G_1=H'$.
Under the assumption of the claim for $H'$, assuming the statement (A) to hold for $s(<n)$, we will prove it for $s+1$.
Let $G_s$ be a set of finite embedded pure virtual braids for $D_0$ satisfying (A) for $s$, that is, $D_0^{G_s(I)} \sim D_0^{H'(I)}$ for any subset $I$ of $N$ and $s \leq |w(h,e)| \leq \text{deg}(h,e)$ for each $(h,e) \in G_s$. 
We take a subset $J$ of $N$ such that $|J|=s$.
We then shift all embedded pure virtual braid in ${(G_s)}_J$ to the ahead of the intervals with fixing embedded pure virtual braids in $G_s \setminus {(G_s)}_J$ by their sliding (Lemma \ref{slidedef}) 
until all endpoints of all element in ${(G_s)}_J$ are completely to their ahead in $G_s \setminus {(G_s)}_J$, where we do not slide between elements in ${(G_s)}_J$. 
We denote the set of the obtained embedded pure virtual braids for $D_0$ by $G'$.
Then, it follows from Lemma \ref{slidingweignt} that  $D_0^{G_s(I)} \sim D_0^{G'(I)}$ for every $I \subset N$.
Moreover, $s+1 \leq |w(h,e)|$ for each new embedded pure virtual braid $(h,e)$ and if $(h,e)$ is derived from sliding between $(h_1,e_1)$ and $(h_2,e_2)$ then $|w(h,e)|  \leq |w(h_1,e_1)| + |w(h_2,e_2)|$.
On the other hand, by Lemma \ref{slidedef}, $\text{deg}(h_1,e_1) + \text{deg}(h_2,e_2) = \text{deg}(h,e)$.
Therefore it follows from the assumption of (A), for any new embedded pure virtual braid $(h,e) \in G'$
$$s+1 \leq |w(h,e)| \leq |w(h_1,e_1)| + |w(h_2,e_2)| \leq \text{deg}(h_1,e_1) + \text{deg}(h_2,e_2) = \text{deg}(h,e).$$

Next we show that $D_0^{G'(I)} \sim (D_0^{G'_J})^{(G' \setminus G'_J)(I)}$ for every $I \subset N$.
If $I \supset J$, then it is clear that  $D_0^{G'(I)} = (D_0^{G_{J}'})^{(G' \setminus G'_J)(I)}$.
If not, then the new embedded pure virtual braids are not contained in $G'(I)$ and hence $(D_0^{G'_J})^{(G' \setminus G'_J)(I)}=D_0^{G'(I)} \cdot D_0^{G'_J}$.
By the property of $(n-1)$-triviality in the assumption of the claim, $D_0^{H'(J)} \sim D_0$, which corresponds to $(n-1)$-triviality with respect to a subfamily $S$ (in Definition \ref{trivial}) of $\{A'_1, A'_2, \cdots , A'_n\}$ consisting of $n-s$ elements.
By the assumption of (A), $D_0^{G'(J)} = D_0^{G'_J}$.
Therefore $D_0^{G'_J} \sim D_0^{G'(J)} \sim D_0^{G_s(J)} \sim D_0^{H'(J)} \sim D_0$ and so $D_0^{G'(I)} \sim (D_0^{G'_J})^{(G' \setminus G'_J)(I)}$.

Now we regard $G' \setminus G'_J$ as a set of embedded pure virtual braids for $D_0^{G'_J}$. 
It follows from Lemma \ref{weigntdegree}(1) that there exists a set ${G''}$ of finite embedded pure virtual braids for $D_0$ such that $(D_0^{G'_J})^{(G' \setminus G'_J)(I)} \sim D_0^{G''(I)}$ for any $I \subset N$.
Note that $G''_J = \emptyset$. 
Thus all embedded pure virtual braids with weights $J$ are eliminated.
Repeating this procedure for all other $J$ such that $|J|=s$, we have a set $G_{s+1}$ of finite embedded pure virtual braids for $D_0$ such that $D_0^{G_{s+1}(I)} \sim D_0^{H'(I)}$ for any $I \subset N$ and $s+1 \leq |w(h,e)| \leq  d(h,e)$ for any $(h,e)$ in $G_{s+1}$, which is the required set satisfying (A) for $s+1$.
This proves the claim for the case that $D = D_0$.


Next we consider the case that $D \sim D_0$.
For a given $H'$ for $D$ in the claim, by Lemma \ref{weigntdegree}(2) there exists a set $H''$ of finite embedded pure virtual braids for $D_0$ with $(n-1)$-triviality such that $D^{H'} \sim D_0^{H''}$. 
Then, by the first case of the claim, $D_0^{H''} \overset{L_n}{\sim} D_0$.
Therefore $D^{H'} \overset{L_n}{\sim} D$ and 
it proves the claim for the case that $D \sim D_0$.

Finally we prove the claim for the case that $D$ is not equivalent to $D_0$.
Since the set of $L_n$-equivalence classes has a group structure (Proposition \ref{5.4}), there is an inverse ${D}^{-1}$ of $D$ up to $L_n$-equivalence for any $n \geq 1$.
Now we show $D \cdot {D}^{-1} \overset{L_n}{\sim} D^{H'} \cdot D^{-1}$, which implies $D  \overset{L_n}{\sim} D \cdot D^{-1} \cdot D  \overset{L_n}{\sim} D^{H'} \cdot D^{-1} \cdot D \overset{L_n}{\sim} D^{H'}$. 
It follows from  $D \cdot {D}^{-1} \overset{L_n}{\sim} D_0$ and Proposition \ref{anotherdef} that there exists an embedded pure virtual braid $(h,e)$ of degree $n$ such that $(D \cdot {D}^{-1})^{(h,e)} \sim D_0$, where $(h,e)$ and $H'$ are disjoint.
Moreover it follows from the case $D \sim D_0$ of the claim that 
$(D^{H'} \cdot D^{-1})^{(h,e)} = ((D \cdot D^{-1})^{(h,e)})^{H'} \overset{L_n}{\sim} (D \cdot D^{-1})^{(h,e)}$.
Therefore $D^{H'} \cdot D^{-1} \overset{L_n}{\sim} (D^{H'} \cdot D^{-1})^{(h,e)} \overset{L_n}{\sim} (D \cdot D^{-1})^{(h,e)} \overset{L_n}{\sim} D \cdot D^{-1}$ and the proof of claim is proved.
\end{proof}

\begin{remark}
Even though we change``second Reidemeister moves" into``Reidemeister moves" in the definition of the $n$-trivial in Definition \ref{trivial}, 
we can show Theorem \ref{L_nandn-equ} similarly. 
Therefore it is concluded that these two $n$-equivalences coincide. 
\end{remark}


\section{$L_n$-equivalence and $V_n$-equivalence}

Goussarov-Polyak-Viro \cite{GPV} mentioned that the value of a finite type invariant of degree less than or equal to $n$ depends only on the $n$-equivalence classes.
Therefore it follows from Theorem \ref{L_nandn-equ} that $L_{n+1}$-equivalence implies $V_n$-equivalence, indirectly. 
In this section, we give this relation directly, by redefining the two-sided ideal $J_n(k)$ of the monoid ring $\mathbb{Z}\mathcal{VSL}(k)$ by using embedded pure virtual braids.

\begin{definition}
Let $l \geq 1$.
Let $H=\{ (h_1,e_1), (h_2,e_2), \dots ,(h_l,e_l) \}$ be a set of $l$ disjoint embedded pure virtual braids for $D$ on $k$ strands.
Denote an element $[[D,H]]$ of $\mathbb{Z}\mathcal{VSL}(k)$ by 
$$ [[D, H]] = \sum_{G \subset H} (-1)^{l - \#G} [D^{G}],$$
where $G$ runs over all $2^l$ subsets of $H$.
We define $[[D,\emptyset]] = [D]$.
The {\it degree} of $H$ is defined by the sum of the degree of its all elements, denoted by deg($H$).
\end{definition}

It is easy to see that 
\begin{align*}
& [[D, \{(h_1,e_1), (h_2,e_2), \dots ,(h_l,e_l) \}]] \\ & \hspace{.3cm}=[[D^{(h_1,e_1)}, \{ (h_2,e_2), \dots ,(h_l,e_l)\}]] - [[D, \{(h_2,e_2), \dots ,(h_l,e_l)\}]]. \\
\end{align*}

\begin{lemma}\label{clasperproperty2}
Let $D$ be a Gauss diagram on $k$ strands and $H$ a set of $l$ disjoint embedded pure virtual braids for $D$ of degree $n$.
Then for any Gauss diagram $D'$ which is equivalent to $D$ there is a set $H'$ of $l$ disjoint embedded pure virtual braids for $D'$ of degree $n$ such that $[[D, H]]$ is equal to $[[D', H']]$ in $\mathbb{Z}\mathcal{VSL}(k)$.
\end{lemma}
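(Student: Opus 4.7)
My plan is to proceed by induction on the length of a sequence of Reidemeister moves taking $D$ to $D'$, reducing to the case of a single Reidemeister move $R$ between them. This parallels the strategy of Lemma~\ref{clasperproperty}, the new point being that an entire scheme $H=\{(h_1,e_1),\dots,(h_l,e_l)\}$ must be transported at once, not just a single clasper.

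Given such an $R$, I would transform each clasper $(h_i,e_i)$ individually by the method of Proposition~\ref{anotherdef}: using second Reidemeister moves, slide any endpoint of $e_i$ lying in the local region of $R$ off to one side, to obtain an embedding $e_i'$ whose image is disjoint from the support of $R$. Since only the embedding changes (the pure virtual braid element $h_i$ is untouched), the degree of each $(h_i,e_i')$ equals that of $(h_i,e_i)$, and hence $\deg(H')=n$ for $H':=\{(h_1,e_1'),\dots,(h_l,e_l')\}$. Pairwise disjointness of $H'$ can be arranged by performing the slidings in a sufficiently thin collar around the move site and staggering the clasper endpoints, using the disjointness of $H$.

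The main obstacle, and the real content of the lemma, is the coherence claim: for \emph{every} subset $G\subseteq H$, with $G'\subseteq H'$ the corresponding subset, one has $D^G \sim D'^{G'}$. The point is that sliding the endpoints of clasper $i$ past the site of $R$ is strictly local and does not interact with any other clasper $(h_j,e_j)$, $j\neq i$; in particular, whether such a clasper is attached or removed is irrelevant to the validity of the sliding. Concretely, the move $R$ together with the ancillary second Reidemeister moves produced by the slidings in Figures~\ref{$L_n$ and RI}--\ref{$L_n$ and RIII} can be routed around the (disjoint) endpoints of exactly those claspers in $G$, taking $D^G$ to $D'^{G'}$. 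I would verify this case-by-case for $R\in\{RI,RII,RIII\}$; geometrically the picture is the one already drawn in those three figures, now with several disjoint claspers near the move site rather than one.

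Once the coherence claim is established, the conclusion is immediate from the definition of the bracket: since $[D^G]=[D'^{G'}]$ in $\mathcal{LVK}$ for each $G\subseteq H$,
\[ [D,H]=\sum_{G\subseteq H}(-1)^{l-\#G}[D^G]=\sum_{G'\subseteq H'}(-1)^{l-\#G'}[D'^{G'}]=[D',H'] \]
in $\mathbb{Z}\mathcal{LVK}$, which is the desired identity.
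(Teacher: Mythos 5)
Your proposal is correct and follows essentially the same route as the paper's proof: transport each clasper past the Reidemeister move site by the method of Figures~\ref{$L_n$ and RI}--\ref{$L_n$ and RIII} from Proposition~\ref{anotherdef}, observe that the correspondence $G\mapsto G'$ satisfies $D^G\sim D'^{G'}$ for every subset, and conclude via the alternating sum. You simply make explicit the degree-preservation, disjointness, and coherence points that the paper leaves implicit.
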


\begin{proof}
If $D'$ is obtained from $D$ by RI, RII or RIII, we can construct a set $H'$ of $l$ disjoint embedded pure virtual braids for $D'$ of degree $n$ such that $D^H \sim D'^{H'}$ by similar method of Figure \ref{$L_n$ and RI}, \ref{$L_n$ and RII} and \ref{$L_n$ and RIII} in the proof of Proposition \ref{anotherdef}.
From the construction of $H'$, for each $G \subset H$, there is the corresponding $G' \subset H'$ such that $D^G \sim D'^{G'}$.
Therefore $[[D, H]] = [[D', H']]$ in $\mathbb{Z}\mathcal{VSL}(k)$.  
\end{proof}

\begin{definition}
Let $n$, $l$ be integers with $1 \leq l \leq n$. Let $J_{n,l}(k)$ denote the two-sided ideal of $\mathbb{Z}\mathcal{VSL}(k)$ generated by the elements $[[D, H]]$ under the composition, 
where $D$ is any Gauss diagram on $k$ strands and $H$ is any set of disjoint $l$ embedded pure virtual braids for $D$ of degree $n$. 
\end{definition}

\begin{remark}\label{ringiso}
The natural homomorphism $i : \mathbb{Z}\mathcal{VSL}(k)  \rightarrow  \mathbb{Z}(\mathcal{VSL}(k)/ {L_n})$ induces the ring isomorphism $\mathbb{Z}\mathcal{VSL}(k)/ J_{n,1}(k) \cong  \mathbb{Z}(\mathcal{VSL}(k)/{L_n})$.
\end{remark}

\begin{lemma}\label{Thm6.7}
Let $D$ be a Gauss diagram on $k$ strands. We then have the following properties. \\
(1) For any positive integer $n$, $J_{n, n}(k) =J_n(k)$ \\
(2) For any positive integers $n$, $l$, $l'$ with $l  \leq l' \leq n$, $J_{n, l}(k) \subset J_{n,l'}(k)$ \\
\end{lemma}

\begin{proof}
(1) We show that 
$$\varphi([\includegraphics[height=.4cm, scale=.8]{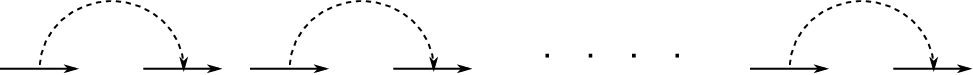}])=[[\includegraphics[height=.4cm, scale=.8]{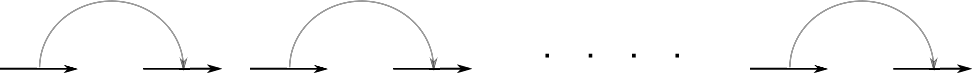}]],$$ 
where the left-hand side of the equation means the image of $n$ dashed arrows by $\varphi$ and the right-hand side of the equation means a Gauss diagram with a set of $n$ disjoint embedded pure virtual braids of degree $n$, that is, a set of $n$ embedded pure virtual braids of degree 1.
If $n=1$, $\varphi([ \includegraphics[height=.4cm, scale=.8]{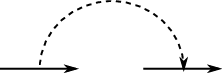} ])= [\includegraphics[height=.4cm, scale=.8]{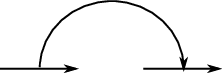}] - [\includegraphics[height=.07cm, scale=.2]{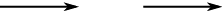}] = [[\includegraphics[height=.4cm, scale=.8]{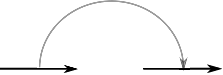}]]$.
Assume the formula holds less than or equal to $n$, it is easy to check that the formula holds $n+1$.
\\ (2) It suffices to show that $J_{n, l}(k) \subset J_{n,l+1}(k)$ for $n$, $l$ with $1 \leq l  \leq n-1$.
Let $[[D,H]] \in J_{n, l}(k)$. By assumption, there is an embedded pure virtual braid of degree $d$ in $H$, say to $(h_1,e_1)$, where $d \geq 2$.
Then $h_1$ can be represented by a pure virtual braid $[h_{1,1}, h_{1,2}] \cdot [h_{2,1}, h_{2,2}] \cdot \cdots \cdot [h_{m,1}, h_{m,2}]$ where deg$(h_{i,1})=1$ and deg$(h_{i,2})=d-1$ for any $i$. 
We define $h_1^j=h_{1,j} \otimes h_{1,j}^{-1} \otimes h_{2,j} \otimes h_{2,j}^{-1} \otimes \cdots \otimes h_{m,j} \otimes h_{m,j}^{-1}$ for $j=1,2$. 
Then deg$(h_1^1)=1$, deg$(h_1^2)=d-1$ and $D_0^{(h^j_1,e_1^j)} \sim D_0$, where $e^j_1$ is the restriction of $e_1$ to $h_1^j$. Therefore we have 

\begin{align*}
[[D_0, \{(h_1,e_1)\}]] 
&= [[D_0^{(h_1,e_1)}]] - [[D_0]] \\
&= [[D_0^{(h_1,e_1)}]] - [[D_0^{(h_1^1,e_1^1)}]] - [[D_0^{(h_1^2,e_1^2)}]] + [[D_0]] \\
&= [[D_0, \{(h_1^1,e_1^1), (h_1^2,e_1^2)\}]]. 
\end{align*}
Hence 
\begin{align*}
[[D,H]] &=[[D, \{(h_1,e_1), (h_2,e_2), \cdots , (h_l, e_l)\}]] \\
&= [[D, \{(h_1^1,e_1^1), (h_1^2,e_1^2), (h_2,e_2), \cdots , (h_l,e_l) \}]] \\
& \in J_{n,l+1}(k).
\end{align*}
\end{proof}

By Lemma \ref{Thm6.7}, we can redefine $J_n(k)$ as the ideal of $\mathbb{Z}\mathcal{VSL}(k)$ generated by elements $[[D, H]]$ where $D$ is any Gauss diagram on $k$ strands and $H$ is any set of disjoint embedded pure virtual braids for $D$ of degree $n$. 

\begin{proposition}\label{L_{n+1}V_n}
For any $n \geq 0$, if two virtual string links $L$ and $L'$ are $L_{n+1}$-equivalent, then $L$ and $L'$ are $V_{n}$-equivalent.
\end{proposition}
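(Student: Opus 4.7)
The plan is to express $K' - K$ as a single-scheme element $[D, \{(h,e)\}]$ produced by the clasper realization of $L_{n+1}$-equivalence, and then to place it inside $J_{n+1}$ via the chain of ideals studied in Lemma \ref{Thm6.7}.

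First I would apply Proposition \ref{anotherdef} to the hypothesis: since $K$ and $K'$ are $L_{n+1}$-equivalent, I can choose a Gauss diagram $D$ representing $K$ together with a single clasper $(h,e)$ of degree $n+1$ for $D$ such that $D^{(h,e)}$ represents $K'$ (up to Reidemeister moves). Setting $H := \{(h,e)\}$, this is a scheme of size $1$ and degree $n+1$ for $D$, and unpacking the definition of $[D, H]$ in the one-element case gives
\[
[D, H] \;=\; [D^{(h,e)}] - [D] \;=\; K' - K \;\in\; \mathbb{Z}\mathcal{LVK}.
\]

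Next I would observe that by construction $[D, H] \in J_{n+1, 1}$. Iterating Lemma \ref{Thm6.7}(2) yields $J_{n+1, 1} \subset J_{n+1, 2} \subset \cdots \subset J_{n+1, n+1}$, and Lemma \ref{Thm6.7}(1) identifies the last ideal with $J_{n+1}$. Therefore $K' - K \in J_{n+1}$, which by the definition of $V_n$-equivalence means that $K$ and $K'$ are $V_n$-equivalent, as required.

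Because all of the substantive work is already packaged into Proposition \ref{anotherdef} (translating $L_{n+1}$-equivalence into a single clasper of the exact degree $n+1$) and Lemma \ref{Thm6.7} (relating $J_{n+1, 1}$ to $J_{n+1}$), I do not expect any real obstacle; the statement amounts to bookkeeping that chains these two results together. The only point that deserves verification is that the clasper produced by Proposition \ref{anotherdef} really is of degree $n+1$ in the strict sense required to put $\{(h,e)\}$ into $J_{n+1, 1}$, which is precisely what the proposition asserts.
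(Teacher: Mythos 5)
Your proof is correct and takes essentially the same route as the paper: the paper also derives $K-K'\in J_{n+1,1}\subset J_{n+1,n+1}=J_{n+1}$ via Lemma \ref{Thm6.7}, merely citing Remark \ref{ringiso} where you instead unpack the membership $K'-K\in J_{n+1,1}$ explicitly through Proposition \ref{anotherdef} and the identity $[D,\{(h,e)\}]=[D^{(h,e)}]-[D]$.
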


\begin{proof}
By Remark \ref{ringiso} and Lemma \ref{Thm6.7}, if two $k$-component virtual string links $L$ and $L'$ are $L_{n+1}$-equivalent,  then $L-L' \in J_{n+1,1}(k) \subset J_{n+1,n+1}(k) = J_{n+1}(k)$.
By the definition, it is equivalent to that $L$ and $L'$ are $V_{n}$-equivalent.
\end{proof}




\end{document}